\pdfoutput=1

\documentclass[preprint,11pt]{elsarticle}

\usepackage{amsmath}
\usepackage{amsthm}
\usepackage{amssymb}
\usepackage{amscd}
\usepackage{amsfonts}
\usepackage{amsbsy}
\usepackage{graphicx}
\usepackage{array}
\usepackage{color}
\usepackage{epsfig}
\usepackage{url}
\usepackage{overpic}
\usepackage{epstopdf}
\usepackage{setspace}
\usepackage{tikz}
\usepackage{color}

\usepackage{float}
\usepackage{booktabs}
\usepackage{tabularx}
\usepackage{epsfig}

\usepackage[lmargin=2.5cm,tmargin=2.5cm,bmargin=2.5cm,rmargin=2.5cm]{geometry}
\usepackage{indentfirst}
\usepackage[hidelinks]{hyperref}
\hypersetup{colorlinks=true,linkcolor=red,citecolor=blue,urlcolor=blue}
\usepackage{multicol}
\usepackage[IT,hang,FIGBOTCAP,TABBOTCAP]{subfigure}

\newcommand{\CLperp}{\mathfrak{X}_{\text{\scriptsize T}}}
\newcommand{\CLequi}{\mathfrak{X}_{\text{\scriptsize S}}}
\DeclareMathOperator{\Fix}{Fix}

\usepackage{amssymb}

\newtheorem{theorem}{Theorem}
\newtheorem{definition}{Definition}
\newtheorem{proposition}{Proposition}
\newtheorem{corollary}{Corollary}
\newtheorem{lemma}{Lemma}
\theoremstyle{definition}

\newtheorem{remark}{Remark}

\newcommand{\CL}{\mathfrak{X}_{\text{\scriptsize$L$}}}

\journal{arXiv}

\begin{document}

\begin{frontmatter}



\title{Symmetric Limit Cycles in 3D Piecewise Linear Systems with Visible-visible Two-Fold Singularity}

\author[mymainaddress]{Samuel Carlos S. Ferreira \corref{cor1}}
\ead{ferreira.samuelcarlos@gmail.com}

\cortext[cor1]{Corresponding author}
\author[mymainaddress]{Bruno R. Freitas}
\ead{freitasmat@ufg.br}

\author[mymainaddress_sp]{Jo\~ao Carlos R. Medrado}
\ead{j.medrado@unesp.br}

\address[mymainaddress]{Institute of Mathematics and Statistics of Federal University of Goi\'{a}s, Avenida Esperan\c{c}a s/n, Campus Samambaia, 74690-900, Goi\^{a}nia, Goi\'{a}s, Brazil}

\address[mymainaddress_sp]{São Paulo State University (UNESP), IBILCE, Campus S. J. Rio Preto, São Paulo, Brazil}

\begin{abstract}
We analyze a three-dimensional discontinuous piecewise linear system \(Z=(X,Y)\) whose switching manifold \(\Sigma\) contains visible-visible two-fold intersection lines. Assuming that the matrices \(DX\) and \(DY\) each have one nonzero real eigenvalue and one pair of complex conjugate eigenvalues, we reduce the system to a canonical form. Under a resonant condition, we use Darboux integrability theory to obtain a first integral common to \(X\) and \(Y\). Its restriction to \(\Sigma\) defines a hyperbola \(\Gamma\), which parametrizes the crossing points of symmetric periodic orbits. On this curve we construct the half-return maps, derive analytic expansions for the corresponding return times near infinity, and introduce a time-matching function given by their difference. By means of the Weierstrass Preparation Theorem, we prove the existence of a large-amplitude symmetric limit cycle for a suitable subfamily of systems. We then study stability through a saltation-corrected monodromy matrix and reduce the problem to Schur--Cohn inequalities for the two transverse Floquet multipliers.
\end{abstract}



\begin{keyword}
Piecewise linear systems \sep first integral \sep symmetric limit cycles \sep orbital stability
%
%

\MSC[2020] 34A36\sep 34C25\sep 34C14\sep 34C20
\end{keyword}

\end{frontmatter}



\section{Introductory Remarks and Main Results}\noindent

Determining the existence, multiplicity, and spatial configuration of limit cycles constitutes one of the central pillars of qualitative dynamical systems theory. Initially formulated for smooth polynomial vector fields as part of Hilbert's 16th problem, this fundamental question remains a profound challenge and has acquired renewed complexity and significance for piecewise linear dynamical systems (PWLS). In such systems, although the flow in each region of the phase space is governed by a linear vector field, the global dynamics may exhibit non-smooth behavior due to transitions across switching manifolds. This allows for the emergence of qualitative dynamical phenomena---such as periodic orbits (see \cite{Freire, Freire2020_1, Freire2020_2, ferreira2026}), invariant surfaces (see \cite{Carmona, Freitas1, freitas3}), and global bifurcation phenomena (see \cite{cristiano2024, hosham2024, wang2024})---which are absent in smooth linear systems but become possible through the coupling of linear vector fields across switching manifolds.

The switching manifold constitutes a fundamental geometric object of the system. Its presence defines a characteristic structural mechanism: an abrupt change in the governing linear dynamics when trajectories cross the boundary separating distinct dynamical regimes in the phase space. This mechanism captures essential features in a variety of applied contexts, from idealized models for impacts and friction in engineering \cite{Brogliato, belykh} to threshold-driven activation in neuroscience \cite{goldbeter, vanSoest, coombes2018, coombes2023, rosch}. Consequently, the mathematical interest of PWLS lies in understanding how the interaction between simple linear subsystems and the global geometry of switching manifolds gives rise to qualitative dynamical phenomena, including the limit cycles that are the focus of this work. 

In this work we study a three-dimensional discontinuous piecewise linear vector field \(Z=(X,Y)\) with switching manifold
\[
\Sigma=\{(x,y,z)\in\mathbb{R}^{3}: z=0\}.
\]
We assume that \(\Sigma\) contains concurrent two-fold tangency lines and that the linear parts of \(X\) and \(Y\) each possess one real eigenvalue and one pair of complex conjugate eigenvalues with nonzero imaginary part. This is a non-generic but highly structured configuration, and precisely this structure makes it possible to obtain a canonical form adapted to both the geometry of the switching manifold and the symmetry properties imposed throughout the paper. After suitable changes of coordinates and parameters, the system is reduced to a normal form that allows a systematic analysis of periodic dynamics.

A central difficulty in the study of periodic solutions of discontinuous piecewise linear systems lies in the determination of the return mechanism across the switching manifold. If a periodic orbit crosses \(\Sigma\) transversely at two distinct points, then a local Poincar\'e return map can be defined near this orbit. In principle, this map is obtained by composing two transition maps: one associated with the flow of \(X\) in the region \(z\ge 0\), and the other associated with the flow of \(Y\) in the region \(z<0\). The construction of these maps requires the computation of the flight times needed for trajectories starting on \(\Sigma\) to return to \(\Sigma\). The periodicity condition is then expressed by requiring the two half-trajectories to reconnect after crossing the discontinuity. Although this procedure is conceptually natural, it is often difficult to implement explicitly, since the required return times usually cannot be written in closed form.

The geometric structure of the problem is clarified by two distinguished lines, \(r^{X}\) and \(r^{Y}\), obtained as the intersections of the invariant planes associated with the complex eigendirections of \(DX\) and \(DY\) with the switching manifold \(\Sigma\). We refer to these invariant planes as focal planes. The lines \(r^{X}\) and \(r^{Y}\) encode the tangency configuration at the origin and play a decisive role in the return mechanism. Their concurrency reflects the two-fold singular structure assumed in the model and distinguishes the class studied here from more generic configurations.

The periodic solutions of interest are those compatible with the involutive symmetry
\[
S(x,y,z)=(-y,-x,-z).
\]
Under this symmetry, the two intersection points of a symmetric periodic orbit with the switching manifold \(\Sigma\) are exchanged by the restriction of \(S\) to \(\Sigma\). Thus, the symmetry characterizes the periodic orbits under consideration and determines how their crossings with the switching manifold are related.

A second structural ingredient is the resonant restriction on the eigenvalues, namely that the modulus of the real eigenvalue equals twice the real part of the complex conjugate pair. This condition guarantees the existence of a common first integral for the two smooth subsystems. As a consequence, the periodic orbit problem can be reduced to the study of a conic
\[
\Gamma\subset\Sigma,
\]
which contains the crossing points of the periodic solutions under consideration. In this way, the search for periodic orbits is reduced from the switching plane to a one-dimensional geometric set. The common first integral is used here as a geometric reduction tool: it selects the conic \(\Gamma\) containing the crossing points of the relevant periodic orbits. 

Once this reduction is achieved, the closing condition can be formulated on \(\Gamma\) in terms of the half-return times of the vector fields \(X\) and \(Y\). More precisely, we analyze these times for initial conditions on \(\Gamma\), especially in a neighborhood of infinity along its hyperbolic branches. Using exponential matrix techniques, we show that the corresponding time-of-flight functions admit analytic asymptotic expansions. Their difference defines a time-matching function whose zeros correspond to symmetric periodic solutions. The Weierstrass Preparation Theorem then yields the existence of an isolated positive zero of this function, and consequently the existence of a large-amplitude limit cycle.

Once existence has been established, we study the stability of the limit cycle. In the piecewise-smooth setting, the derivative of the Poincar\'e map is not determined solely by the variational equations of the smooth subsystems, since one must also incorporate the jump in the linearized flow produced when trajectories cross the switching manifold; see \cite{dieci2011}. For this reason, we compute a saltation-corrected monodromy matrix and apply the Schur--Cohn criterion; see, for instance, p.~247 of \cite{Elaydi2005}. This allows us to identify a region of the parameter plane bounded by two curves, whose coordinates are given by the real part of the complex eigenvalues and the parameter associated with the intersections of the focal planes with the switching manifold, in which the limit cycle is attracting.

After establishing existence and attraction, we turn to the algorithmic aspect of the problem. Although the proof of existence is formulated through the reduced dynamics on the hyperbola \(\Gamma\), this same reduced setting also yields an effective numerical procedure for locating the attracting cycle. More precisely, within the stability band, by choosing an initial condition on \(\Gamma_1\) and iterating the reduced return map, one is led to the fixed point corresponding to the attracting limit cycle. In this way, the procedure provides a practical method for detecting the cycle and for generating a catalogue of limit cycles within the corresponding stability band.

The analysis developed in this paper is therefore organized around the interplay of symmetry and resonance. The symmetry characterizes the periodic orbits under consideration, while the resonant condition guarantees the existence of a common first integral. This, in turn, reduces the problem to the study of the conic \(\Gamma\), on which the time-matching condition can be formulated. From this reduced setting, we obtain the existence of limit cycles, derive stability conditions through saltation corrections and Floquet theory, and formulate an algorithmic procedure that produces a catalogue of cycles within the corresponding stability band.

We denote by \(\CL\) the class of piecewise linear vector fields \(Z=(X,Y)\) such that \(DX\) and \(DY\) each admit one real eigenvalue and one pair of complex conjugate eigenvalues with nonzero imaginary part. We denote by \(\CLperp\subset\CL\) the subclass for which the tangency lines \(r^{X}\) and \(r^{Y}\) are concurrent at the origin. Finally, \(\CLequi\subset\CLperp\) denotes the equivariant subclass with respect to the involution \(S\).

\begin{theorem}\label{teo:main1}
	Consider \(Z\in\CLequi\subset\CLperp\). Then the following statements hold:
	\begin{enumerate}
	\item[(a)] An orbit \(\gamma\) is a crossing symmetric periodic orbit of \(Z\) if and only if its successive intersection points \((x_0,y_0,0)\) and \((x_1,y_1,0)\) with the switching manifold \(\Sigma\) satisfy
	\[
		x_1=-y_0
		\qquad\text{and}\qquad
		y_1=-x_0.
	\]
	\item[(b)] If, in addition, the real eigenvalue is minus twice the real part of the associated complex pair, then there exist vector fields \(Z\in\CLequi\) admitting at least one symmetric limit cycle.
\item[(c)] Furthermore, under the hypotheses of item~(b), there exists an open subset of the \((C,H)\)-plane, called the stability region, in which the corresponding symmetric limit cycle is asymptotically stable. In particular, this stability persists under sufficiently small perturbations of the parameters within that region.
	\end{enumerate}
\end{theorem}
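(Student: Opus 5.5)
The proof splits along the three items.

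\textbf{Item (a).} I will use the equivariance $Z\circ S = S\circ Z$ (more precisely $X(S p)=S\,Y(p)$ for the two halves). It follows that $S$ maps orbits of $Y$ in $z<0$ to orbits of $X$ in $z>0$, preserving time orientation because $S$ is an involution commuting with the flow. Let $\gamma$ be a crossing periodic orbit. Let $p_0=(x_0,y_0,0)$ be the point where it enters $z>0$ and $p_1=(x_1,y_1,0)$ the point where it returns to $\Sigma$.

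\emph{Necessity.} If $\gamma$ is symmetric, i.e.\ $S(\gamma)=\gamma$, then $S$ maps the $X$-arc of $\gamma$ onto its $Y$-arc. Hence $S$ exchanges the two crossing points, so $p_1=S p_0=(-y_0,-x_0,0)$.

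\emph{Sufficiency.} Suppose $p_1=Sp_0$. Applying $S$ to the $X$-arc from $p_0$ to $p_1$ gives a $Y$-arc from $Sp_0=p_1$ to $Sp_1=p_0$. By uniqueness of solutions of $Y$ and transversality at the crossing points, this is exactly the continuation of the orbit. The orbit therefore closes and is $S$-invariant.

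\textbf{Item (b).} I will work in the canonical form of $\CLequi$ with eigenvalues $\lambda=-2\alpha$ and $\alpha\pm i\beta$. In eigen-coordinates the flow is $e^{-2\alpha t}w$ and $e^{\alpha t}R_{\beta t}(u,v)$, so the resonance makes
\[
H=w\,(u^2+v^2)
\]
a Darboux first integral for $X$. By equivariance, $H\circ S$ is a first integral for $Y$, and I expect the normal form makes these coincide up to a constant. Along a symmetric orbit the value of $H$ is constant. Evaluating $H$ at $p_0$ and at $Sp_0$ then forces $p_0$ to lie on a fixed conic $\Gamma\subset\Sigma$, namely the hyperbola singled out by the paper. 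I will parametrize a branch $\Gamma_1$ by $s$ near infinity.

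Next I define $t_X(s)$, the first return time to $\Sigma$ of the $X$-flow from $\Gamma(s)$. By (a) and the first integral, requiring that this flow hit the point $S\Gamma(s)$ reduces to a single scalar equation. The natural way to write it is as a time-matching function $F(s)=t_X(s)-t_Y(s)$, where $t_Y$ is the time for the $X$-trajectory to reach the $S$-image line.

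Then I change variables $\rho=1/s$ and expand the return-time equation $e_3^{T}e^{tA}\Gamma(s)=0$ in powers of $\rho$. Near infinity the linear part dominates, so the times tend to the half-period values $\approx\pi/\beta$ plus corrections. The implicit function theorem gives $t_X$ and $t_Y$ as analytic functions of $\rho$ and the parameters $(\alpha,\beta,C,H)$. Writing $F(\rho,\mu)$, I will show that $F(0,\mu_0)=0$ and that the first nonvanishing $\rho$-derivative has order $k$. The Weierstrass Preparation Theorem then factors $F$ as a unit times a Weierstrass polynomial. Choosing a subfamily of parameters for which this polynomial has a simple positive small root $\rho^*$ gives a large-amplitude isolated symmetric periodic orbit, that is, a limit cycle.

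\textbf{Item (c).} I will compute the monodromy matrix as
\[
M=S_1\,e^{t_Y B}\,S_0\,e^{t_X A},
\]
where each saltation matrix has the form
\[
S_i=I+\frac{(Y-X)\,n^{T}}{n^{T}X}\,\Big|_{p_i}.
\]
$M$ has the trivial multiplier $1$. The two remaining multipliers are the roots of a quadratic $\mu^2-T\mu+D$. The Schur--Cohn conditions $|D|<1$ and $|T|<1+D$ define an open set in the $(C,H)$-plane. Because all the data are continuous in the parameters, this set is open and stability persists under small perturbations.

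I expect the main obstacle to be the expansion in (b): showing that $F$ has a finite-order zero at $\rho=0$ with a sign change. This requires carrying the asymptotics of the transcendental return-time equations to high enough order. My first attempt will be to expand $t=\pi/\beta+\sum c_j\rho^j$ and solve for the coefficients recursively.
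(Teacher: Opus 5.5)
Your item (a) is the paper's argument: the identity $\varphi_Y(t,p)=S\varphi_X(t,Sp)$ forces $p_1=Sp_0$. For (b) you follow the paper's route (resonant Darboux integral, conic on $\Sigma$, analytic flight times in $1/y_0$, time-matching function, Weierstrass). \textbf{The gap is that the step which actually produces a cycle is left undone, and it is not automatic.} Both flight times tend to the same half-period ($\pi$ in the canonical form) for \emph{every} parameter value. Hence $F(0,\mu)\equiv 0$ and $F(\rho,\mu)=\rho\,\bigl(\gamma_1(\mu)+\gamma_2(\mu)\rho+O(\rho^2)\bigr)$. If $\gamma_1(\mu_0)\neq 0$, then $\rho=0$ is the only zero near $(0,\mu_0)$ and there is no large cycle. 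A small positive zero can only appear near a parameter where $\gamma_1=0$, $\gamma_2\neq 0$, $\gamma_1$ changes sign, and which lies in the hyperbolic range $-\tfrac13<H<1$. Whether such a parameter exists is exactly what must be computed; your ``first nonvanishing derivative has order $k$'' and ``choose a subfamily'' postpone it. The paper computes $\gamma^X_1,\gamma^X_2,\gamma^Y_1,\gamma^Y_2$ explicitly. It finds $\gamma_1=0$ at $\overline H=1/(2\cosh(\pi C)-1)\in(0,1)$ with $\gamma_2\neq 0$, and then applies the Weierstrass Preparation Theorem (of order one) to $F/\rho$. A transparent way to see the critical value: $\gamma_1=\frac{(1+e^{\pi C})\Lambda}{C^2+1}\bigl(e^{-\pi C}-m_{\Gamma_1}(H)\bigr)$, where $m_{\Gamma_1}=\lim y_0/x_0$ is the asymptotic slope of the branch. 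So, to leading order, the asymptote of $\Gamma_1$ must match the linear half-return multiplier $e^{\pi C}$. Since $m_{\Gamma_1}$ is strictly increasing in $H$, $\gamma_1$ changes sign at $\overline H$ (for $C>0$). Three minor points:
\begin{itemize}
\item $P_X$ and $P_X\circ S$ do not agree up to a constant, since their zero sets contain the distinct planes $W^X$ and $W^Y$. You only need $P_X(p_0)=P_X(Sp_0)$ along the $X$-arc.
\item That cubic equation factors as $(x+y)$ times the conic, and the factor vanishing on $\Fix(S)$ must be discarded.
\item $t_Y$ should be the backward $Y$-flight time from $p_0$ (equivalently, the backward $X$-flight time from $Sp_0$).
\end{itemize}

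For (c), continuity only gives openness. What has to be proved is that the set is nonempty, i.e.\ that the Schur--Cohn inequalities really hold for the cycles of (b). You compute neither $\det M$ nor $\operatorname{tr}M$. The paper does:
\begin{itemize}
\item $A+2C=0$ makes $\operatorname{tr}DX=\operatorname{tr}DY=0$, so the smooth factors have determinant $1$.
\item Each saltation factor has determinant equal to the ratio of the normal components of the outgoing and incoming fields, so $\det M=(y_0/x_0)^2\to m_{\Gamma_1}^2<1$.
\item It computes the limiting trace $\tau_{\Gamma_1}$ and obtains the band bounded by $2+m_{\Gamma_1}^2-\tau_{\Gamma_1}=0$ (the curve $H=H_{\mathrm{crit}}$) and $\tau_{\Gamma_1}+m_{\Gamma_1}^2=0$.
\end{itemize}
Your saltation formula, used literally at both crossings, is wrong at the $Y\to X$ crossing, where it must be $I+(X-Y)e_3^T/(Yf)$. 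With your version the two determinants are $x_0/y_0$ and $y_0/x_0$, so $\det M\equiv 1$ and $|D|<1$ could never hold. Also, stability cannot be obtained by perturbing from the birth of the cycles. At $H=\overline H$, where the large cycles emerge from infinity, the limiting monodromy has a transverse multiplier equal to $1$ (the upper boundary of the band). Which side of the unit circle it falls on for nearby cycles therefore has to be settled by explicit computation.
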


The paper is organized as follows. In Section \ref{sec:dvf} we introduce the basic definitions and auxiliary results. Section \ref{sec:cf} derives the canonical form for the class under consideration. In Section \ref{sec:caralimitcyle} we characterize symmetric periodic orbits and prove part~(a) of Theorem \ref{teo:main1}. Section \ref{sec:invcurve} constructs the algebraic curve \(\Gamma\) from the common first integral. In Section \ref{subsec:hra} we prove the analyticity of the half-return times along the relevant branches of \(\Gamma\). Section \ref{sec:time_matching} introduces the time-matching function and proves part~(b) of Theorem \ref{teo:main1}. The final section, Section \ref{subsec:stability_saltation}, develops the orbital stability theory and proves part~(c) of Theorem \ref{teo:main1}.

\section{Preliminary Results}\label{sec:dvf}
In this section, we define the objects and state the fundamental results used to prove our main theorems. For basic definitions on piecewise vector fields, we refer the reader to \cite{marco}.
\begin{definition}
	Let $\mathfrak{X}$ be the set of all polynomial vector fields of degree one on $\mathbb{R}^3$. Consider  $X=\left(X_1,X_2,X_3\right) \in \mathfrak{X}$, with $X_j(x,y,z)=A_jx+B_jy+C_jz+D_j$ where $A_j,B_j,C_j,D_j$ are real constants for $j=1,2,3.$
\end{definition}
\begin{definition}
	A point $p\in\mathbb{R}^3$ is a singularity of $X\in\mathfrak{X}$ if $X\left(p\right)=0$. 
\end{definition}
In the following definition, we establish the class of vector fields we consider.

\begin{definition}\label{def:cl2z}
	Let $\CL$ be the set of the piecewise linear vector fields $Z$, 
	\begin{equation}\label{eq:geral}
		Z(\mathbf{x})=\left(X(\mathbf{x}),Y(\mathbf{x})\right)=\begin{cases}
			X(\mathbf{x})\ \text{if} \ \mathbf{x} \in \Sigma^{+},
			\\
			Y(\mathbf{x})\ \text{if} \ \mathbf{x} \in \Sigma^{-},
		\end{cases}
	\end{equation}
	where $\mathbf{x}=\left(x,y,z\right)$, $X,Y\in\mathfrak{X}$ such that the matrices $DX$ and $DY$ each have a nonzero real eigenvalue and a pair of conjugate complex eigenvalues with a nonzero imaginary part. The  switching set is the plane $\Sigma=\{\mathbf{x}\in\mathbb{R}^3 \ | \ f\left(\mathbf{x}\right)=z=0\}$,  $\Sigma^{+}=\{\mathbf{x}\in\mathbb{R}^3 \ | \ f\left(\mathbf{x}\right)\geq 0\}$ and  $\Sigma^{-}=\{\mathbf{x}\in\mathbb{R}^3 \ | \ f\left(\mathbf{x}\right)< 0\}$.
	
\end{definition}

The switching plane $\Sigma$ contains three distinct regions, determined by the way the vector fields $X$ and $Y$ meet $\Sigma$. Accordingly, the classification of points $p\in\Sigma$ depends on the Lie derivatives $Xf(p)=\left<X(p),\nabla f(p)\right>$ and $Yf(p)=\left<Y(p),\nabla f(p)\right>$. Therefore, we have:

\begin{enumerate}
	\item The \textit{crossing region} $\Sigma^{\rm c} = \{p\in\Sigma \ | \ Xf(p)\,Yf(p)>0\}$;
	\item The \textit{sliding region} $\Sigma^{\rm s}=\{p\in\Sigma \ | \ Xf(p)<0 \quad \text{and} \quad Yf(p)>0\}$;
	\item The \textit{escaping region} $\Sigma^{\rm e}=\{p\in\Sigma \ | \ Xf(p)>0 \quad \text{and} \quad Yf(p)<0\}$.
\end{enumerate}
Here, “c”, “s”, and “e” are abbreviations for “crossing”, “sliding”, and “escaping”, respectively.
The solutions in the sliding and escaping regions are constructed using Filippov's convex method \cite{Filippov}. This method involves considering the convex combination $Z^{\rm s}\left(p\right)$ between the vector fields $X$ and $Y$ at each point $p\in\Sigma^{\rm s}\cup\Sigma^{\rm e}$, given by:
\begin{equation}
Z^{\rm s}(p)=\dfrac{Yf(p)X(p)-Xf(p)Y(p)}{Yf(p)-Xf(p)}.
\end{equation}
The vector field $Z^{\rm s}$ is called the \textit{sliding vector field} and the points $x\in\Sigma^{\rm s}\cup\Sigma^{\rm e}$ such that $Z^{\rm s}(x)=0$ are called a \textit{pseudo-equilibrium of} \eqref{eq:geral}.

The \textit{tangency points} are the points of $\Sigma$ where either of the vector fields $X$ or $Y$ is tangent to $\Sigma$. These points are found on the boundaries $\partial\Sigma^{\rm c}\cup\partial\Sigma^{\rm s}\cup\partial\Sigma^{\rm e}$ and satisfy $Xf\left(p\right)=0$ or $Yf\left(p\right)=0$. Since the vector fields are polynomials of degree one, these tangency points form lines, referred to as \textit{tangency lines}. We denote the tangency lines of vector fields $X$ and $Y$ by $L_X$ and  $L_Y$, respectively.
\begin{definition}\label{def:tangencia_quadratica}
	A point $p\in L_{X}$ ($q\in L_{Y}$) is said to be a fold if $X^2f(p)\neq0$ ($Y^2f(q)\neq0$). A fold $p\in L_{X}$  ($q\in L_{Y}$) is called a visible fold point for the vector field $X$ (resp., $Y$) if $X^2f(p)>0$ (resp., $Y^2f(q)<0$) and invisible fold if $X^2f(p)<0$ (resp., $Y^2f(q)>0$). A point $p\in\Sigma$ is a double fold  point if $p$ is a tangency point for both vector fields $X$ and $Y$. A point $p\in L_{X}$ ($q\in L_{Y}$) is said to be a cusp if $X^2f(p)=0$ and $X^3f(p)\neq0$ ($Y^2f(q)=0$ and $Y^3f(q)\neq0$).
\end{definition}

We consider $Z=(X,Y)\in \CL$, where \(X(\mathbf{x}) = \sum_{j=1}^{3} (A_j x + B_j y + C_j z + D_j) \frac{\partial}{\partial x_j}\) and \(Y(\mathbf{x}) = \sum_{j=1}^{3} (a_j x + b_j y + c_j z + d_j) \frac{\partial}{\partial x_j}\), with \(\mathbf{x} = (x, y, z)\). The tangency lines are then given by \(L_X = \{p(x, y, z) \in \Sigma: A_3 x + B_3 y + D_3 = 0\}\) and \(L_Y = \{p(x, y, z) \in \Sigma: a_3 x + b_3 y + d_3 = 0\}\). Note that within \(\Sigma\), the lines \(L_X\) and \(L_Y\) can be concurrent, parallel, or coincident.

The primary objective is to study the existence of limit cycles. Therefore, the coefficients of $X_3$ and $Y_3$ must satisfy $A_3^2+B_3^2\neq 0$ and  $a_3^2+b_3^2\neq 0$, implying that the subsets $L_X$ and $L_Y$ must not be empty. In other cases, all points of $\Sigma$ are either tangential or not, i.e., there are no limit cycles.

Consider $X\in\mathfrak{X}$ and the differential system 
\begin{equation}\label{eq:sistema_diferencial}
	\dot{\mathbf{x}}\left(t\right)=X\left(\mathbf{x}\left(t\right)\right),
\end{equation}
where $\mathbf{x}=\left(x,y,z\right)$.
Let $\varphi_X\left(t,p_0\right)$ be the flow of \eqref{eq:sistema_diferencial} for a point $p_0\in\mathbb{R}^3$. Thus, we have
$$
\begin{cases}
	\dfrac{d}{dt}\varphi_X\left(t,p_0\right)=X\left(\varphi_X\left(t,p_0\right)\right),\\
	\varphi_X\left(0,p_0\right)=p_0.
\end{cases}
$$
The flow $\varphi_X(t,p_0)$ is defined for $t\in I\subset\mathbb{R}$, where $I=I(p_0,X)$  is a real interval which depends on the point $p_0$ and the vector field $X$.
\begin{definition}
	The trajectory of a vector field $Z=\left(X,Y\right)\in\CL$ is given by:
	\begin{itemize}
		\item if $p_0\in\Sigma^{+}$ and $X(p_0)\neq0$, then $\varphi_Z\left(t,p_0\right)=\varphi_X\left(t,p_0\right)$, for $t\in I(p_0,X)\subset\mathbb{R}$;
		\item if $p_0\in\Sigma^{-}$ and $Y(p_0)\neq0$, then $\varphi_Z\left(t,p_0\right)=\varphi_Y\left(t,p_0\right)$, for $t\in I(p_0,Y)\subset\mathbb{R}$;
		\item  if $p_0\in\Sigma^{\rm c}$ and $Xf(p_0)>0$ and $Yf(p_0)>0$, then 
		$$
		\varphi_Z\left(t,p_0\right)=\begin{cases}
			\varphi_X\left(t,p_0\right)\quad t\geq0,\\
			\varphi_Y\left(t,p_0\right)\quad t<0.
		\end{cases}
		$$
		If $Xf(p_0)<0$ and $Yf(p_0)<0$, consider the reverse time;
		\item If \(p_0 \in \partial\Sigma^{\rm c} \cup \partial\Sigma^{\rm s} \cup \partial\Sigma^{\rm e}\) and the definitions of the trajectories for points in \(\Sigma\) on both sides of \(p_0\) can be extended to \(p_0\) and coincide, the trajectory passing through \(p_0\) is this trajectory. These points are called regular tangencies.
		\item  $\varphi_Z\left(t,p_0\right)=p_0$, $t\in\mathbb{R}$ for all other points $p_0\in\mathbb{R}^3$. This case includes the singular tangencies points in $\Sigma$ (not regular), the singular points of the vector fields $X$ or $Y$ or $Z^{\rm s}$, and the invisible-invisible folds.
	\end{itemize}
\end{definition}
\begin{definition}
	The local orbit $\gamma(p_0)$ of a point $p_0\in\mathbb{R}^3$ for the vector field $Z$ is defined as the set of all points reachable from $p_0$ by following the flow $\varphi_Z$, that is, $\gamma(p_0)=\{\varphi_Z\left(t,p_0\right)\ | \ t\in I\}$.
\end{definition}
Next, we define the concept of the fundamental matrix, following \cite{dieci2011}, that we use to determine the closing equations.

To analyze local orbital stability, we consider the Poincar\'e first-return map associated with the crossing periodic orbit. Its linearization is described by the variational flow along each smooth segment of the orbit, combined with the jump corrections at the switching events through the corresponding saltation matrices; see \cite{dieci2011}.

\begin{definition}\label{def:matriz_fundamental}
	Let \(\varphi_X(t,p_0)\) be the flow of a \(C^1\) vector field \(X\). The fundamental matrix along the trajectory through \(p_0\) is defined by
	\[
	\Phi_X(t,0):=D_{p_0}\varphi_X(t,p_0),
	\]
	and satisfies the variational equation
	\[
	\begin{cases}
		\Phi_X'(t,0)=DX(\varphi_X(t,p_0))\,\Phi_X(t,0),\\[2mm]
		\Phi_X(0,0)=I.
	\end{cases}
	\]
\end{definition}

The fundamental matrix is nonsingular and satisfies the usual composition property. In particular, it maps the vector field along the trajectory according to
\begin{equation}\label{eq:mapeamento_matriz_fundamental}
	X(\varphi_X(t,p_0))=\Phi_X(t,0)\,X(p_0),
\end{equation}
which is the standard flow-invariance property of the variational equation.

Let \(\Pi\) denote the local first-return map near a crossing periodic orbit, and let \(p_0\in\Sigma\) be one of its intersection points with the switching manifold. Then
\[
M=D\Pi(p_0)
\]
is the corresponding monodromy matrix. For a crossing periodic orbit, \(M\) is obtained by composing the smooth variational flows with the saltation matrices at the switching instants, namely
\begin{equation}\label{eq:monodromy_saltation}
	M=S_{Y\to X}(p_0)\,\Phi_Y(t_Y)\,S_{X\to Y}(p_1)\,\Phi_X(t_X),
\end{equation}
where \(p_1\) is the next intersection point with \(\Sigma\), and
\[
\Phi_X(t_X)=e^{DX\,t_X},
\qquad
\Phi_Y(t_Y)=e^{DY\,t_Y}.
\]

Since \(\Sigma=\{f=0\}\) with \(f(x,y,z)=z\), the saltation matrices are given by
\[
S_{X\to Y}
=
I+\frac{(Y-X)}{Xf}\,e_3^T,
\qquad
S_{Y\to X}
=
I-\frac{(Y-X)}{Yf}\,e_3^T,
\]
where
\[
Xf=\nabla f^T X,
\qquad
Yf=\nabla f^T Y.
\]

Moreover, by \eqref{eq:mapeamento_matriz_fundamental} and its piecewise-smooth counterpart, the generalized monodromy matrix preserves the tangent direction of the periodic orbit. Hence, if \(\gamma(t)\) is a periodic orbit of period \(T\), then
\[
M\,Z(p_0)=Z(p_0),
\]
so that \(\mu_1=1\) is the trivial Floquet multiplier associated with time-translation invariance. Therefore, orbital stability is determined by the two nontrivial multipliers.

The mapping property \eqref{eq:mapeamento_matriz_fundamental} of the fundamental matrix plays a crucial role in our analysis, as it will be used to define our closing equations. 

Since $Z\in\CL$, it follows that the exponential matrices ${\rm e}^{DX\,t}$ and ${\rm e}^{DY\,(-t)}$ are fundamental matrices for the vector fields $X$ and $Y$, respectively. 

\begin{definition}\label{def:eq_fechamento}
Consider \( Z = (X, Y) \in \CL \). Given that the matrices \( DX \) and \( DY \) each possess a non-zero real eigenvalue, along with complex eigenvalues having non-zero imaginary parts, we define the closing equations for the vector fields \( X \) and \( Y \) as  
\[
{\rm e}^{DX\,t^{X}}X(x_0, y_0, 0) = X(x_1, y_1, 0) \quad \text{and} \quad {\rm e}^{DY\,(-t^{Y})}Y(x_0, y_0, 0) = Y(x_2, y_2, 0),
\]
where \( (x_0, y_0, 0) \) is the initial point and \( y_0 > 0 \) is chosen sufficiently large to ensure the existence of return times \( t^{X} > 0 \) and \( t^{Y} > 0 \), as well as return points \( (x_1, y_1, 0) \) and \( (x_2, y_2, 0) \) for the vector fields \( X \) and \( Y \), respectively.
\end{definition}

In order to find explicit first integrals of the vector fields involved, we introduce the notion of Darboux polynomials. These play a central role in identifying invariant algebraic surfaces and constructing first integrals, especially in systems with polynomial structure.

\begin{definition}\label{def:darbouxpolynomial}
A polynomial \( f(x, y, z) \in \mathbb{C}[x, y, z] \) is called a \emph{Darboux polynomial} for the vector field \( X \) if there exists a polynomial \( k(x, y, z) \in \mathbb{C}[x, y, z] \) such that \( X(f) = kf \). In this case, \( k \) is called the \emph{cofactor} of \( f \). The algebraic surface \( \{ f = 0 \} \) is then invariant under the flow of \( X \).
\end{definition}

It is worth noting that for a polynomial vector field of degree \( d \), the degree of any cofactor \( k \) will not exceed \( d - 1 \).



The following proposition provides a classical criterion, known as the Darboux integrability condition, for constructing first integrals from a finite number of Darboux polynomials.

\begin{proposition}\label{prop:firstintegral}
     Assume that a vector field \(X\) admits \(n\) Darboux polynomials \(f_i\) with cofactors \(k_i\) for \(i = 1, \ldots, n\). Then, a function \(F = f_1^{\lambda_1} \cdots f_n^{\lambda_n}\) is a first integral of \(X\) if and only if there exist constants \(\lambda_1, \ldots, \lambda_n\), not all zero, such that \(\sum_{i=1}^{n} \lambda_i k_i = 0\). Such a function \(F\) is called a Darboux function.
\end{proposition}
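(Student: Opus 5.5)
The Darboux integrability criterion is a purely algebraic identity in the derivation \(X\), so I would prove it by differentiating \(F\) logarithmically.

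\textbf{Computing \(X(F)\).} Let \(F=f_1^{\lambda_1}\cdots f_n^{\lambda_n}\), defined on the open set where no \(f_i\) vanishes, with a fixed branch of each complex power when \(\lambda_i\notin\mathbb{Z}\). Since \(X\) acts as a derivation,
\[
X(F)=\sum_{i=1}^{n}\lambda_i\,f_i^{\lambda_i-1}X(f_i)\prod_{j\neq i}f_j^{\lambda_j}
=F\sum_{i=1}^{n}\lambda_i\frac{X(f_i)}{f_i}.
\]
Substituting the Darboux relations \(X(f_i)=k_if_i\) from Definition~\ref{def:darbouxpolynomial} gives
\[
X(F)=F\sum_{i=1}^n\lambda_i k_i .
\]
This identity is the core of the argument; both implications follow from it.

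\textbf{Sufficiency.} If \(\sum_i\lambda_ik_i=0\) with not all \(\lambda_i\) zero, then \(X(F)\equiv0\) on the domain of \(F\). Hence \(F\) is constant along the trajectories of \(X\), that is, \(F\) is a first integral there.

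\textbf{Necessity.} Suppose \(X(F)=0\). Since \(F\) does not vanish on its open domain, the identity forces the polynomial \(\sum_i\lambda_ik_i\) to vanish on a nonempty open set. A polynomial vanishing on an open set is the zero polynomial, so \(\sum_i\lambda_ik_i=0\). The requirement that the \(\lambda_i\) are not all zero is only there to exclude the trivial constant \(F\equiv1\).

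\textbf{Main obstacle.} There is no real difficulty here. The only care needed is to interpret \(F\) when the \(\lambda_i\) are complex or non-integer, which is why everything is restricted to the complement of \(\bigcup_i\{f_i=0\}\). One also needs that this complement is a nonempty open set, which holds because each \(f_i\) is a nonzero polynomial.
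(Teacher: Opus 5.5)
Your proof is correct and takes the same route as the paper. Both arguments derive the logarithmic-derivative identity \(X(F)=F\sum_{i}\lambda_i k_i\) and read off the two implications from it. Restricting to the complement of \(\bigcup_i\{f_i=0\}\) and noting that a polynomial vanishing on a nonempty open set is identically zero makes explicit what the paper's step ``since \(F\neq 0\)'' leaves implicit.
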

\begin{proof}
(\textbf{$\Rightarrow$}) Suppose that \( \sum_{i=1}^n \lambda_i k_i = 0 \). Define the function
\[
F := f_1^{\lambda_1} \cdots f_n^{\lambda_n}.
\]
We compute the Lie derivative of \( F \) along the vector field \( X \). Using the chain rule for logarithmic derivatives, we have:
\[
X(F) = F \cdot \sum_{i=1}^n \lambda_i \frac{X(f_i)}{f_i}.
\]
Since \( f_i \) is a Darboux polynomial with cofactor \( k_i \), we have \( X(f_i) = k_i f_i \), and therefore
\[
\frac{X(f_i)}{f_i} = k_i.
\]
Substituting, we get
\[
X(F) = F \cdot \sum_{i=1}^n \lambda_i k_i = F \cdot 0 = 0.
\]
Thus, \( F \) is constant along the trajectories of \( X \), i.e., \( F \) is a first integral.

\bigskip
\textbf{($\Leftarrow$)} Conversely, suppose that \( F = f_1^{\lambda_1} \cdots f_n^{\lambda_n} \) is a first integral, so that \( X(F) = 0 \). As above,
\[
X(F) = F \cdot \sum_{i=1}^n \lambda_i k_i.
\]
Since \( F \neq 0 \), it must be that
\[
\sum_{i=1}^n \lambda_i k_i = 0.
\]

Therefore, the function \( f_1^{\lambda_1} \cdots f_n^{\lambda_n} \) is a first integral of \( X \) if and only if \( \sum_{i=1}^n \lambda_i k_i = 0 \).
\end{proof}

The next results will be used to study analytic function zeros. We first state a version of the Implicit Function Theorem for analytic functions to guarantee the existence of an analytic function implicitly defined by an equation  $F(x,y)~=~0$, see \cite{Krantz[2013]}.

\begin{theorem}[Implicit Function Theorem]\label{teo:funcao_implicita}
	Suppose that the  power series
	$$
	F\left(x,y\right)=\displaystyle\sum_{k,n}a_{k,n}x^ky^n,
	$$ is absolutely convergent to $|x|\leq R_1$ and $|y|\leq R_2$. If $a_{0,0}=0$ and $a_{0,1}\neq0$,
	then there is $r_0>0$ such that the power series 
	$$
	f(x)=\displaystyle\sum_{n>0}c_nx^n,
	$$
	is absolutely convergent to $|x|\leq r_0$ and
	$$
	F(x,f(x))=0.
	$$
\end{theorem}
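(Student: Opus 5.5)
The plan is to prove Theorem~\ref{teo:funcao_implicita} by complex-analytic means: locate the zero in \(y\) with Rouché's theorem and represent it by a Cauchy-type integral. Absolute convergence of \(\sum a_{k,n}x^ky^n\) on the closed polydisc \(|x|\le R_1\), \(|y|\le R_2\) makes \(F\) continuous on that polydisc. It also makes \(F\) holomorphic in the open polydisc, jointly and separately in each variable, since the series converges normally there. All the work therefore stays inside the polydisc.

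\textbf{Step 1 (a single simple zero at \(x=0\)).} Write \(F(0,y)=a_{0,1}y+\sum_{n\ge 2}a_{0,n}y^n\), using \(a_{0,0}=0\). Since \(a_{0,1}\neq 0\), we have \(|F(0,y)|\ge |a_{0,1}||y| - \sum_{n\ge2}|a_{0,n}||y|^n\). The tail is \(O(|y|^2)\), so we can choose \(0<\rho<R_2\) with the following two properties.
\begin{itemize}
\item[(i)] \(y=0\) is the only zero of \(F(0,\cdot)\) in \(|y|\le\rho\), and it is simple.
\item[(ii)] \(m:=\min_{|y|=\rho}|F(0,y)|>0\).
\end{itemize}

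\textbf{Step 2 (Rouché, uniformly in \(x\)).} We need the bound
\[
|F(x,y)-F(0,y)|\le \sum_{k\ge1,n\ge0}|a_{k,n}||x|^k\rho^n ,
\]
valid for \(|y|\le \rho\). By absolute convergence, the right-hand side tends to \(0\) as \(|x|\to0\). Hence there is \(r_1\in(0,R_1)\) such that the right-hand side is \(<m\) whenever \(|x|\le r_1\). This uniform estimate is the only real obstacle, and the absolute-convergence hypothesis is exactly what supplies it. Rouché's theorem then shows that, for each such \(x\), \(y\mapsto F(x,y)\) has exactly one zero in \(|y|<\rho\), counted with multiplicity; call it \(f(x)\). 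It is simple, and since \(F(0,0)=0\) we get \(f(0)=0\).

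\textbf{Step 3 (analyticity of \(f\)).} By the argument principle,
\[
f(x)=\frac{1}{2\pi i}\oint_{|y|=\rho} y\,\frac{\partial_yF(x,y)}{F(x,y)}\,dy,\qquad |x|<r_1 .
\]
The denominator is bounded below by \(m-(\text{tail})>0\) on the circle, uniformly in \(x\). The integrand is therefore holomorphic in \(x\) and continuous in \((x,y)\), so differentiation under the integral sign shows that \(f\) is holomorphic in \(|x|<r_1\). Consequently \(f\) has a Taylor series \(\sum_{n\ge0}c_nx^n\), and \(c_0=f(0)=0\). Taking any \(r_0<r_1\), the Cauchy estimates give \(|c_n|\le M r_1^{-n}\) with \(M=\sup_{|x|<r_1}|f(x)|\le\rho\). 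The series therefore converges absolutely on \(|x|\le r_0\), and by construction \(F(x,f(x))=0\) there.

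As an optional remark, one can also argue by formal recursion. Writing \(F=a_{0,1}\bigl(y-G(x,y)\bigr)\) determines the \(c_n\) uniquely, because the coefficient of \(x^n\) in \(G(x,f(x))\) involves only \(c_k\) with \(k<n\). By uniqueness, real coefficients \(a_{k,n}\) yield real \(c_n\). This is what is needed to apply the theorem to the real time-matching functions later in the paper.
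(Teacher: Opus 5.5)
Your proof is correct, but it follows a different route from the paper's. The paper does not prove this theorem at all; it takes the statement from the reference it cites (Krantz and Parks). There the result appears as Cauchy's analytic implicit function theorem and is proved by the method of majorants.

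In the majorant argument one writes $F=a_{0,1}\,(y-G(x,y))$ and determines the $c_n$ by exactly the recursion in your closing remark. Convergence is then proved by bounding $|c_n|$ termwise by the coefficients of the solution of an explicitly solvable (quadratic) majorant equation, built from the bound $|a_{k,n}|R_1^kR_2^n\le M$ that absolute convergence provides.

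Your approach instead complexifies and uses Rouch\'e's theorem and the argument principle. Besides existence, this gives uniqueness of the zero in $|y|<\rho$ for every $|x|\le r_1$, an integral representation of $f$, and the clean estimate $|c_n|\le \rho\, r_1^{-n}$. The cost is the complex-analytic machinery, plus the need to recover the reality of the $c_n$ separately, which your recursion remark does correctly.

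The majorant proof stays entirely within real power series and produces the coefficients constructively, with an explicit radius $r_0$ in terms of $M$, $R_1$, $R_2$ and $|a_{0,1}|$. By itself, however, it gives uniqueness only among power-series solutions; local uniqueness of the zero near the origin needs a separate argument.
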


Let $f(z, p)$ be an analytic function of multiple variables, where $z \in \mathbb{C}$ and $p = (p_1 ,\ldots,p_n) \in\mathbb{C}^n$. If $f\left(z_0 , p_0\right)=0$, the Weierstrass Preparation Theorem (see \cite{WPT_2001})  provides a representation of the function $f(z, p)$ in a neighborhood of the point $\left(z_0, p_0\right)$ as a product of two factors: one is a polynomial in $z-z_0$ and the other factor is nonzero at $\left(z,p\right)=\left(z_0,p_0\right)$. Therefore, Weierstrass Preparation Theorem ``prepares" the function $f$ for a study of its zeros, because the zeros of the function $f(z,p)$ near $\left(z,p\right)=\left(z_0,p_0\right)$ are precisely the zeros of the polynomial factor of the representation obtained from the conclusion of the theorem.

\begin{theorem}[Weierstrass Preparation Theorem]\label{teo:teorema_preparacao_weierstrass}
	Suppose that $G:\mathbb{C}\times\mathbb{C}^n\to\mathbb{C}$ is an analytic function such that there exists $\left(x_0,p_0\right)\in\mathbb{C}\times\mathbb{C}^n$ so that
	$$
	G\left(x_0,p_0\right)=\dfrac{\partial G}{\partial x}\left(x_0,p_0\right)=\cdots=\dfrac{\partial^{m-1}G}{\partial x^{m-1}}\left(x_0,p_0\right)=0, \qquad \dfrac{\partial^{m}G}{\partial x^{m}}\left(x_0,p_0\right)\neq0.
	$$ Therefore, there is a neighborhood $U_0\in \mathbb{C}\times\mathbb{C}^n$ of the point $\left(x_0,p_0\right)$, so that   
	\begin{equation}\label{eq:teorema_preparacao_weierstrass}
		G(x,p)=H(x,p)\left(\left(x-x_0\right)^m+\sum_{n=0}^{m-1}A_n(p)\left(x-x_0\right)^n\right),
	\end{equation}
	where $A_n(p)$ and $H(x,p)$ are analytic functions defined uniquely by the function $G$ and $A_n(p_0)=0$ and $H(x,p)\neq0 $ for all $\left(x,p\right)\in U_0$.
\end{theorem}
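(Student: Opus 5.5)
The last statement is the Weierstrass Preparation Theorem (Theorem~\ref{teo:teorema_preparacao_weierstrass}), a classical result, so I would give the standard proof, reducing the existence of the factorization to the Weierstrass division theorem. I would first translate so that $(x_0,p_0)=(0,0)$. The hypothesis says that $g(x):=G(x,0)=x^m u(x)$ with $u(0)\neq 0$ analytic. Hence $x=0$ is a zero of multiplicity exactly $m$ of $g$. Choose $r>0$ such that $g$ has no other zeros in $|x|\le r$. By compactness of $\{|x|=r\}$ and continuity, there is $\delta>0$ such that $G(x,p)\neq 0$ for $|x|=r$ and $|p|<\delta$.

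\textbf{Construction of the polynomial factor.} For $|p|<\delta$, the argument principle gives that
\[
N(p)=\frac{1}{2\pi i}\oint_{|x|=r}\frac{\partial_x G(x,p)}{G(x,p)}\,dx
\]
is an integer-valued continuous function of $p$, hence constant and equal to $m$. Let $\xi_1(p),\dots,\xi_m(p)$ be the zeros of $G(\cdot,p)$ in $|x|<r$, counted with multiplicity. The power sums
\[
s_k(p)=\sum_j \xi_j(p)^k=\frac{1}{2\pi i}\oint_{|x|=r} x^k\,\frac{\partial_x G}{G}\,dx
\]
are analytic in $p$ by differentiation under the integral sign. Newton's identities then express the elementary symmetric functions of the $\xi_j$ polynomially in $s_1,\dots,s_m$. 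Therefore
\[
P(x,p)=\prod_{j}(x-\xi_j(p))=x^m+\sum_{n<m}A_n(p)x^n
\]
has analytic coefficients $A_n(p)$. Since all the roots vanish at $p=0$, we get $A_n(0)=0$.

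\textbf{The unit factor.} Set $H=G/P$. For fixed $p$, $H(\cdot,p)$ has only removable singularities in $|x|<r$, because $P$ and $G$ share the same zeros with the same multiplicities, and it has no zeros there. To see that $H$ is analytic jointly in $(x,p)$, I would use the Cauchy representation
\[
H(x,p)=\frac{1}{2\pi i}\oint_{|\zeta|=r}\frac{G(\zeta,p)}{P(\zeta,p)(\zeta-x)}\,d\zeta,
\]
which is valid because $P\neq0$ on $|\zeta|=r$. The integrand is analytic in $(x,p)$, so $H$ is analytic on $U_0=\{|x|<r,\ |p|<\delta\}$. Moreover $H(x,p)\neq0$ there.

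\textbf{Uniqueness.} Suppose $G=\tilde H\tilde P$ is another such factorization. Then $\tilde P(\cdot,p)$ has the same zeros, with multiplicity, in the disc as $G(\cdot,p)$, since $\tilde H\neq0$ there. After shrinking the neighborhood so that all roots of $\tilde P$ lie in $|x|<r$, this follows by continuity from $\tilde P(x,0)=x^m$. Hence $\tilde P=P$, because both are monic of degree $m$ with the same roots, and then $\tilde H=H$.

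\textbf{Main obstacle.} The delicate step is the analyticity of the coefficients $A_n(p)$. The individual roots $\xi_j(p)$ need not be analytic, and may branch. The power-sum integrals combined with Newton's identities are exactly what circumvent this, so the proof should handle that point carefully.
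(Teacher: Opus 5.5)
The paper gives no proof of this theorem. It quotes the Weierstrass Preparation Theorem from the literature and only uses it later, on the time-matching function in the proof of part (b). So there is no argument of the paper's to compare yours against.

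Your proof is the standard complex-analytic one, and it is correct:
\begin{itemize}
\item The argument principle fixes the number of zeros of $G(\cdot,p)$ in $|x|<r$ at $m$ for $|p|<\delta$.
\item The residue formula for the power sums $s_k(p)$, together with Newton's identities, gives analytic coefficients $A_n(p)$ with $A_n(0)=0$. This avoids the branching of the individual roots $\xi_j(p)$, which is the real point of the proof.
\item The Cauchy integral of $G/\bigl(P\,(\zeta-x)\bigr)$ over $|\zeta|=r$ gives joint analyticity of $H=G/P$. It is valid because $P(\cdot,p)$ has all its roots inside the disc, so $G/P$ is holomorphic near $|x|\le r$. $H$ is nonvanishing because $G$ and $P$ have the same zeros with multiplicity.
\item Your continuity-of-roots argument gives uniqueness at the level of germs, which is the right reading of ``defined uniquely'' in the statement.
\end{itemize}

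Two small remarks. First, your opening sentence announces a reduction to the Weierstrass division theorem, but the proof you write never uses it. You prove preparation directly, which is preferable, since division is usually either deduced from preparation or proved by the same Cauchy-integral device.

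Second, your uniqueness step also yields the version the paper actually needs. There the function $\tau(v,H)$ is real-analytic in real variables, expanded at a real point. In that case $G(x,p)=\overline{G(\bar x,\bar p)}$. Conjugating $G=HP$ then gives another factorization of the same form, with unit $\overline{H(\bar x,\bar p)}$ and Weierstrass polynomial coefficients $\overline{A_n(\bar p)}$. Uniqueness forces $A_n$ and $H$ to be real on real arguments.
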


\section{Canonical Form}\label{sec:cf}
In this section, we obtain the canonical form for our class of 3D piecewise linear vector fields.\begin{definition}
	Let $\CLperp\subset\CL$ be the set of vector fields $Z=(X,Y)$ such that the tangency lines $L_{X}$ and $L_{Y}$ are concurrent at the origin of $\mathbb{R}^3$.
\end{definition}
The following proposition establishes a suitable canonical form for systems in this class.

\begin{proposition}
	Consider $Z\in \CLperp$. There exists a linear change of variables that transforms $Z$ into
		\begin{equation}\label{eq:canonica_ortogonal}
		Z(\mathbf{x})	=	\begin{cases}
				X(\mathbf{x})=(A\,x -H\left(\left(\left(A -C \right)^{2}+1\right)z-\Lambda\right),\Lambda-(1+C^{2})z,2C\,z+y)  &\text{if} \ \mathbf{x}\in\Sigma^{+},
				\\
				Y(\mathbf{x})=(\lambda-(1+c^{2})z,ay -h\left(\left(\left(a-c\right)^2+1\right)z-\lambda\right),2c\,z+x)  &\text{if} \ \mathbf{x}\in\Sigma^{-},
		\end{cases}
	\end{equation}
	where $\mathbf{x}=\left(x,y,z\right)$.
\end{proposition}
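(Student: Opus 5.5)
Starting from a general $Z=(X,Y)\in\CLperp$ with $X=\sum_j(A_jx+B_jy+C_jz+D_j)\partial_{x_j}$ and $Y=\sum_j(a_jx+b_jy+c_jz+d_j)\partial_{x_j}$, I will look for an affine map preserving $\Sigma$. Concretely,
\[
\Psi(x,y,z)=(\alpha_1x+\beta_1y+\gamma_1z+\delta_1,\;\alpha_2x+\beta_2y+\gamma_2z+\delta_2,\;\kappa z),\qquad \kappa>0,
\]
so that $\Sigma$, $\Sigma^{\pm}$ and the sign conventions for visibility are preserved. Two time rescalings are also allowed, one in each half-space; a positive rescaling does not change orbits or crossing directions.

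\textbf{Step 1: normalize the third components.} Since $L_X$ and $L_Y$ are concurrent, the vectors $(A_3,B_3)$ and $(a_3,b_3)$ are linearly independent. I translate in $(x,y)$ so that the intersection point becomes the origin, which kills $D_3,d_3$. I then choose the linear part of $\Psi$ on $\Sigma$ so that $A_3x+B_3y$ becomes a multiple of $y$ and $a_3x+b_3y$ becomes a multiple of $x$. After rescaling $z$ (and time in each half-space), I obtain $X_3=y+\tilde C_3z$ and $Y_3=x+\tilde c_3z$. Thus $L_X=\{y=0\}$ and $L_Y=\{x=0\}$, and I write $\tilde C_3=2C$, $\tilde c_3=2c$.

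\textbf{Step 2: remaining freedom and the first two components.} The maps that preserve $z$, $y$ on the $X$-side, and $x$ on the $Y$-side are restricted. Along $\Sigma$ I can still shear $x\mapsto x+\gamma_1 z$ and $y\mapsto y+\gamma_2 z$, and these shears change the third components only through the $z$-coefficients. I will use them to produce the structure of $X$: its $y$-component has no $x,y$ dependence, namely $\Lambda-(1+C^2)z$, and its $x$-component is $Ax+(\dots)z+H\Lambda$. The key observation is that $X_2$ must equal $\dot y$, and
\[
\ddot z=\dot y+2C\dot z .
\]
So the characteristic polynomial in the $(y,z)$ block is governed by $X_2$. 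To make $DX$ block triangular, with real eigenvalue $A$ in the $x$-direction and complex pair $C\pm i$, I require $\partial_xX_2=\partial_xX_3=0$ and $\partial_yX_2=0$. The pair $C\pm i$ then forces the $z$-coefficient of $X_2$ to be $-(1+C^2)$, after rescaling time so that the imaginary part is $1$.

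\textbf{Step 3: the $x$-component of $X$ and the symmetric statements for $Y$.} In the $x$-component, the coefficient of $y$ can be eliminated by the shear $x\mapsto x+\mu y$, and the remaining constant and $z$-coefficient are free. I parametrize them as $H\Lambda$ and $-H((A-C)^2+1)$. This choice is legitimate because $(A-C)^2+1\neq0$, so any pair of values can be written this way with suitable $H,\Lambda$. It is also the parametrization that puts the focal plane of $X$ through the origin in a convenient form. For $Y$, I repeat the same argument with the roles of $x$ and $y$ interchanged.

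\textbf{Main obstacle.} The delicate point is that only one affine change $\Psi$ is available for both fields. The shears used for $X$ (eliminating the $x$-dependence of $X_2$ and the $y$-dependence of $X_1$) and those used for $Y$ act on different coordinate combinations. I need to check that they commute, or at least can be chosen simultaneously. My first attempt will be to make the independence of $L_X$ and $L_Y$ give the needed freedom: the $x$-coordinate is tied to $L_Y$ and the $y$-coordinate to $L_X$. In addition, block triangularity of $DX$ requires the real eigendirection of $X$ to be the $x$-axis. I will justify this by choosing the new $x$-axis along the real eigenvector of $DX$, which lies in $\Sigma$ generically, and the new $y$-axis along the real eigenvector of $DY$. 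I will count parameters to confirm that nothing is over-determined; where a coefficient cannot be removed, I will absorb it into $A$, $H$ or $\Lambda$.
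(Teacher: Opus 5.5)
Your proposal has a genuine gap, located exactly at the ``main obstacle'' you flagged, and it cannot be closed by a clever choice of shears.

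After Step~1 (so $X_3=y+C_3z$, $Y_3=x+c_3z$), the only affine maps preserving $\Sigma$, its two sides and these normalizations are $(x,y,z)\mapsto(\alpha x+\gamma_1 z,\ \beta y+\gamma_2 z,\ \kappa z)$, together with time rescalings. In particular, the shear $x\mapsto x+\mu y$ of Step~3 is not available: it turns $Y_3$ into $x-\mu y+\dots$ and moves $L_Y$. A direct computation shows the following. The shear $x\mapsto x+\gamma_1z$ adds $\gamma_1$ to both $\partial_yX_1$ and $\partial_xY_1$. The shear $y\mapsto y+\gamma_2z$ adds $\gamma_2$ to both $\partial_yX_2$ and $\partial_xY_2$. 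Neither touches $\partial_xX_2$ or $\partial_yY_1$. The scalings only multiply $\partial_xX_2$, $\partial_yY_1$, $\partial_yX_1-\partial_xY_1$ and $\partial_yX_2-\partial_xY_2$ by nonzero constants. The target form has all six of these partial derivatives equal to zero. It therefore forces $\partial_xX_2=0$, $\partial_yY_1=0$, $\partial_yX_1=\partial_xY_1$ and $\partial_yX_2=\partial_xY_2$ in Step-1 coordinates, and none of these follows from $Z\in\CLperp$. Your justification for the first is also wrong. In Step-1 coordinates the real eigenvector of $DX$ lies in $\Sigma$ iff it is parallel to $L_X$ iff $\partial_xX_2=0$. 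That is a codimension-one condition, not a generic one; the same holds for $DY$ and $L_Y$.

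Step~3 has a second problem. $\Lambda$ is already pinned as the constant term of $X_2$, so only $H$ is free. Writing $X_1=Ax+C_1z+D_1$, requiring $(D_1,C_1)=(H\Lambda,-H((A-C)^2+1))$ is the extra equation $D_1((A-C)^2+1)+C_1\Lambda=0$. This says precisely that the focal plane $W^X=\{x=H(Az+y)\}$ passes through the two-fold point, and likewise for $W^Y$.

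A dimension count shows that nothing can be ``absorbed''. Pairs $(X,Y)$ form a $24$-dimensional space in which $\CLperp$ is open. The affine maps preserving $\Sigma$ and its sides, together with two time rescalings, form an $11$-dimensional group. The target family has $8$ parameters. Since $8+11<24$, a generic $Z\in\CLperp$ cannot be brought to this form.

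The paper's proof does not derive these conditions either:
\begin{itemize}
\item It obtains $\partial_xX_2=\partial_yY_1=0$ by adding the hypothesis that every point of $L_X$ and $L_Y$ is a fold. Since $X^2f|_{L_X}$ is affine in $x$ with slope $\partial_xX_2$, it can be nowhere zero only if that slope vanishes.
\item It spends its two shears on one coefficient of each field, $\partial_yX_1$ and $\partial_xY_2$, which is compatible.
\item It then simply sets $\partial_yX_2=\partial_xY_1=0$ and ties the $z$-coefficient and constant of $X_1$ (resp.\ of $Y_2$) to $H,\Lambda$ (resp.\ $h,\lambda$).
\end{itemize}
The canonical form is therefore a normalization of a codimension-six subclass. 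To make your argument correct, state these conditions as hypotheses: the fold condition, the two shear-compatibility relations, and the condition that $r^X,r^Y$ pass through the two-fold point. The two $z$-shears and the time rescalings then finish the normalization.
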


\begin{proof}
	Since we assume that the tangency lines $L_{X}$ and $L_{Y}$ are concurrent in $\Sigma$, it follows that the linear system
	$$
	\begin{cases}
		A_3x+B_3y+D_3=0,\\
		a_3x+b_3y+d_3=0,
	\end{cases}
	$$
	has a unique solution. Consequently, $m=A_{3} b_{3}-B_{3} a_{3}\neq0$.
	Without loss of generality, we may assume that this intersection occurs at the origin.
	Thus, $D_{3}=d_{3}=0$.

	By performing the change of variables $u=\dfrac{A_3y-B_3x}{m}$ and $v=\dfrac{b_3x-a_3y}{m}$ and then renaming the variables, the system \(Z\) can be written as
	$$
	Z\left(\mathbf{x}\right)=	\begin{cases}
		X(\mathbf{x})=(A_1x+B_1y+C_1z+D_1,B_2y+C_2z+D_2,C_3z+y)  &\text{if} \ z\geq0,
		\\
		Y(\mathbf{x})=(a_1x+c_1z+d_1,a_2x+b_2y+c_2z+d_2,c_3z+x)   &\text{if} \ z\leq0.
	\end{cases}
	$$
	It follows that
	$$
	L_{X}=\left\{(x,0,0)\ \Big|\ x\in\mathbb{R}\right\} \qquad \text{and} \qquad L_{Y}=\left\{(0,y,0)\ \Big|\ y\in\mathbb{R}\right\}.
	$$

	Now, computing the second Lie derivatives of $X$ and $Y$, we obtain
	$
	X^2f(p)=A_2x+ D_{2}\ \text{for} \ p\in L_{X}
	$
	and
	$ Y^2f(q)=b_1y+ d_{2}\ \text{for}\ q\in L_{Y}.
	$
	Under the assumption that all tangencies are of fold type, it follows that
	$A_2=0$ and $b_1=0$.

	Consequently,
	$$
	Z\left(\mathbf{x}\right)=	\begin{cases}
		X(\mathbf{x})=(A_1x+B_1y+C_1z+D_1,B_2y+C_2z+D_2,C_3z+y)  &\text{if} \ z\geq0,
		\\
		Y(\mathbf{x})=(a_1x+c_1z+d_1,a_2x+b_2y+c_2z+d_2,c_3z+x)   &\text{if} \ z\leq0.
	\end{cases}
	$$

	Now, setting $u=x+B_1z$ and $v=y+a_2z$ and renaming the variables, we obtain
	\begin{equation}\label{eq:quase_canonica}
		Z\left(\mathbf{x}\right)=	\begin{cases}
			X(\mathbf{x})=(A_1x+C_1z+D_1,B_2y+C_2z+D_2,C_3z+y)  &\text{if} \ z\geq0,
			\\
			Y(\mathbf{x})=(a_1x+c_1z+d_1,b_2y+c_2z+d_2,c_3z+x)   &\text{if} \ z\leq0.
		\end{cases}
	\end{equation}

As we assume that $DX$ and $DY$ admit complex eigenvalues with non-zero imaginary parts, the dynamics in each region exhibit rotational behavior around the eigenspace generated by the real eigenvalue. We scale the frequencies of the complex eigenvalues to $1$ for both dynamics, so that the expansion or contraction of each focus depends only on the real part of the corresponding complex eigenvalue. Therefore, system~\eqref{eq:quase_canonica} can be rewritten in terms of the eigenvalues of the matrices $DX$ and $DY$, namely, $A$ and $a$, which correspond to the real eigenvalues, and $C\pm i$ and $c\pm i$, which denote the pairs of complex conjugate eigenvalues, respectively, by setting
	\[
A_1=A,\qquad	B_{2}=0,\qquad C_{2}=-C^{2}-1,\qquad C_{3}=2C
	\]
	and 
	\[
	a_1=0,\qquad b_2=a, \qquad c_1=-c^2-1,\qquad c_3=2c.
	\]
	
As previously discussed, the angular and linear coefficients of the lines $r^X$ and $r^Y$ play a significant role in our analysis. To simplify their equations, we set
	\[
	C_1 = -\left((A - C)^2 + 1\right)H,
	\qquad
	D_1 = \Lambda\,H,
	\] and
	\[
	c_2=-\left((a - c)^2 + 1\right)h
	\qquad
	d_2=\lambda\,h.
	\]
Finally, renaming \[D_2=\Lambda, \qquad d_2=\lambda\] we get the canonical form \eqref{eq:canonica_ortogonal}. 
\end{proof}
Given $Z \in \CLperp$, we can express $Z$ in the canonical form~\eqref{eq:canonica_ortogonal}. In this representation, the parameters $A$ and $C$ (respectively, $a$ and $c$) denote the real eigenvalue and the real part of the complex pair of eigenvalues of $DX$ (respectively, $DY$). The parameters $H$ and $h$ determine the inclination of the lines $r^{X}$ and $r^{Y}$ on the switching manifold $\Sigma$. The parameters $\Lambda$ and $\lambda$ represent the second Lie derivatives of the vector fields $X$ and $Y$, respectively, and are associated with the visibility of the corresponding fold tangencies.

This choice simplify the computation of the first integral. In this setup, the zero level sets of the Darboux polynomials for the vector fields \(X\) and \(Y\) correspond to the invariant planes \(W^X\) and \(W^Y\), which are generated by the real and imaginary parts of the eigenvector associated with the complex eigenvalue of each field. The intersection of the planes \(W^X\) and \(W^Y\) with the discontinuity surface \(\Sigma\) yields two lines, \(r^X\) and \(r^Y\), which will play an important role in the hypotheses of our results. 

\begin{definition}[Equivariance]\label{def:equigeral}
	Let $Z=(X,Y)$ be a piecewise linear vector field with switching manifold
	\[
	\Sigma=\{(x,y,z)\in\mathbb{R}^3 \mid z=0\},
	\]
	and let
	\[
	S:\mathbb{R}^3\to\mathbb{R}^3
	\]
	be a linear involution satisfying $S(\Sigma)=\Sigma$ and $S(\Sigma^+)=\Sigma^-$.  
	We say that $Z$ is \emph{$S$-equivariant} if
	\begin{equation}\label{eq:def_equiv_final}
	X\big(S(\mathbf{x})\big)=S\big(Y(\mathbf{x})\big), \qquad \forall\,\mathbf{x}\in\Sigma^+.
	\end{equation}
\end{definition}

\begin{definition}\label{def:equiva}
	Let $\CLequi\subset\CLperp$ be the set of vector fields $Z=(X,Y)$ that are equivariant under the involutive symmetry
	\[
	S(x,y,z)=(-y,-x,-z).
	\]
\end{definition}

\begin{proposition}\label{prop:equivariant}
	A vector field $Z=(X,Y)\in\CLequi$ if and only if
	\[
	a=A,\qquad c=C,\qquad h=H,\qquad \lambda=-\Lambda.
	\]
\end{proposition}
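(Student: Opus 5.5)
The plan is a direct coefficient comparison in the canonical form \eqref{eq:canonica_ortogonal}. Both sides of \eqref{eq:def_equiv_final} are affine functions of \(\mathbf{x}=(x,y,z)\). Their agreement on the set \(\Sigma^+\), which has nonempty interior, is therefore equivalent to agreement on all of \(\mathbb{R}^3\). This in turn is equivalent to equality of the coefficients of \(x\), \(y\), \(z\) and of the constant term in each of the three components. So it suffices to compute \(X(S\mathbf{x})\) and \(S(Y(\mathbf{x}))\) explicitly and match coefficients.

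With \(S\mathbf{x}=(-y,-x,-z)\), substituting into \(X\) gives
\[
X(S\mathbf{x})=\Big(-Ay+H\big((A-C)^2+1\big)z+H\Lambda,\;\Lambda+(1+C^2)z,\;-2Cz-x\Big).
\]
Applying \(S\) to \(Y(\mathbf{x})\) gives
\[
S(Y(\mathbf{x}))=\Big(-ay+h\big((a-c)^2+1\big)z-h\lambda,\;-\lambda+(1+c^2)z,\;-2cz-x\Big).
\]

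For necessity, compare the two expressions component by component.
\begin{itemize}
\item The \(z\)-coefficient of the third component gives \(c=C\).
\item The constant term of the second component gives \(\lambda=-\Lambda\). Its \(z\)-coefficient, \(1+C^2=1+c^2\), is then automatic.
\item The \(y\)-coefficient of the first component gives \(a=A\).
\item With \(a=A\) and \(c=C\), the \(z\)-coefficient of the first component becomes \(H\big((A-C)^2+1\big)=h\big((A-C)^2+1\big)\). Since \((A-C)^2+1>0\), this yields \(h=H\).
\item The constant term of the first component, \(H\Lambda=-h\lambda\), is then automatically satisfied.
\end{itemize}
It is worth deriving \(h=H\) from the \(z\)-coefficient rather than from the constant term, so that the degenerate case \(\Lambda=0\) causes no trouble.

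For sufficiency, substitute \(a=A\), \(c=C\), \(h=H\), \(\lambda=-\Lambda\) into \(S(Y(\mathbf{x}))\). It then coincides with \(X(S\mathbf{x})\) identically, so \eqref{eq:def_equiv_final} holds. It remains to confirm that \(S\) is a linear involution with \(S(\Sigma)=\Sigma\) and \(S(\Sigma^+)=\Sigma^-\) up to the boundary \(\Sigma\), which is immediate since \(S\) flips the sign of \(z\). Hence \(Z\in\CLequi\).

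There is no real obstacle here. The only point needing care is the justification for passing from an identity on \(\Sigma^+\) to a polynomial identity, which rests on the fact that an affine map vanishing on an open set vanishes everywhere.
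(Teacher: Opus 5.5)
Your proof is correct and follows the same route as the paper: substitute the canonical form into $X(S\mathbf{x})=S(Y(\mathbf{x}))$ and match coefficients. You add two things the paper leaves implicit: the justification that an affine identity on $\Sigma^+$ extends to all of $\mathbb{R}^3$, and reading off $h=H$ from the $z$-coefficient so that $\Lambda=0$ causes no trouble. Your first component $-Ay+H\big((A-C)^2+1\big)z+H\Lambda$ is the correct one; the paper's displayed formula carries the opposite signs on the last two terms, which taken literally would force $h=-H$, so your computation is the consistent version of the paper's argument.
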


\begin{proof}
Let $Z=(X,Y)$ be given by the canonical form~\eqref{eq:canonica_ortogonal}.  
By Definition~\ref{def:equigeral}, since $S(\Sigma^+)=\Sigma^-$, equivariance of
the piecewise vector field $Z$ is characterized by the condition
\[
X\big(S(\mathbf{x})\big)=S\big(Y(\mathbf{x})\big),
\qquad \forall\,\mathbf{x}\in\Sigma^+.
\]

Let $\mathbf{x}=(x,y,z)\in\Sigma^+$. A direct substitution of the canonical
expressions of $X$ and $Y$ given in~\eqref{eq:canonica_ortogonal} yields
\[
X(-y,-x,-z)=\Big(-Ay-H\big((A-C)^2+1\big)z-H\Lambda,\,
\Lambda+(1+C^2)z,\,
-2Cz-x\Big),
\]
and
\[
S\big(Y(x,y,z)\big)=\Big(-ay+h\big((a-c)^2+1\big)z-h\lambda,\,
-\lambda+(1+c^2)z,\,
-2cz-x\Big).
\]
Requiring equality of these expressions for all $(x,y,z)$ implies
\[
a=A,\qquad c=C,\qquad h=H,
\]
and, by comparing the constant terms in the second component,
\[
\lambda=-\Lambda.
\]

Conversely, if the relations
\[
a=A,\qquad c=C,\qquad h=H,\qquad \lambda=-\Lambda
\]
hold, then a direct substitution into~\eqref{eq:canonica_ortogonal} shows that
\[
X\big(S(\mathbf{x})\big)=S\big(Y(\mathbf{x})\big), \qquad \forall\,\mathbf{x}\in\Sigma^+,
\]
and hence $Z$ is $S$-equivariant. This concludes the proof.
\end{proof}

\begin{remark}
Under the conditions given in Proposition~\ref{prop:equivariant}, the involution
$S$ maps the geometric structures associated with the vector field $X$ onto those
associated with $Y$, namely,
\[
S(W^X)=W^Y \quad \text{and} \quad S(r^X)=r^Y.
\]
In particular, the inclination of the tangency lines on the switching manifold
$\Sigma$ and the spectral structure of the linear dynamics are preserved by the
symmetry.
\end{remark}

As a consequence of Proposition~\ref{prop:equivariant}, in the equivariant class
$\CLequi$ the piecewise vector field $Z=(X,Y)$ is completely determined by a
single linear vector field. More precisely, once the restriction of $Z$ to
$\Sigma^+$ is fixed, the vector field on $\Sigma^-$ is uniquely determined by
the equivariance condition.

Indeed, for $Z\in\CLequi$, the vector field $Y$ is obtained from $X$ through
\begin{equation}\label{eq:Y_from_X}
	Y(\mathbf{x}) = S\big(X(S(\mathbf{x}))\big), \qquad \mathbf{x}\in\Sigma^-.
\end{equation}
Consequently, the study of equivariant systems in $\CLequi$ can be reduced to
the analysis of a single linear vector field defined on $\Sigma^+$.

\begin{remark}
	The fixed-point set of the involution $S(x,y,z)=(-y,-x,-z)$ is given by
	\[
	\Fix(S)=\{\mathbf{x}\in\mathbb{R}^3 \mid S(\mathbf{x})=\mathbf{x}\}
	=\{(x,-x,0)\mid x\in\mathbb{R}\}.
	\]
	In particular, $\Fix(S)\subset\Sigma$ and is invariant under the flow generated by any $S$-equivariant vector field. As a consequence, symmetric trajectories and periodic orbits, when they exist, may intersect the switching manifold along this fixed-point line.
\end{remark}

Assuming that the vector fields belong to the equivariant class \(\CLequi\) (Definition~\ref{def:equiva} and Proposition~\ref{prop:equivariant}), and let \([S]\) be the matrix associated with the involution
\[
S(x,y,z)=(-y,-x,-z),
\qquad
[S]=
\begin{pmatrix}
	0 & -1 & 0\\
	-1 & 0 & 0\\
	0 & 0 & -1
\end{pmatrix}.
\]
At the linear level, equivariance yields
\[
DX\,[S]=[S]\,DY,
\qquad
DY=[S]DX[S].
\]
Therefore \(DX\) and \(DY\) are similar, and
\begin{equation}\label{eq:fundamental_conjugacy}
	\Phi_Y(t)=e^{DYt}=[S]\,e^{DXt}\,[S]=[S]\Phi_X(t)[S].
\end{equation}
Equivalently, this follows from the flow identity
\[
\varphi_Y(t,p)=S\bigl(\varphi_X(t,S(p))\bigr),
\]
after differentiation with respect to the initial condition.

\section{Characterization of Limit Cycles}\label{sec:caralimitcyle}

\begin{proof}[Proof of part (a) of Theorem \ref{teo:main1}]
	Since $Z\in\CLequi$, the vector fields $X$ and $Y$ are related by the involution
	$S$ through
	\[
	Y(\mathbf{x})=S\big(X(S(\mathbf{x}))\big).
	\]
	Therefore, the flow of $Y$ is the image under $S$ of the flow of $X$.
	
	Let $(x_0,y_0,0)\in\Sigma$ and assume that the trajectory starting at this point
	evolves under the flow of $X$ until it reaches $\Sigma$ again at
	$(x_1,y_1,0)$. The subsequent evolution under the flow of $Y$ is then uniquely
	determined as the $S$-image of the first trajectory segment.
	
	For the full trajectory to close and form a periodic orbit, the second segment
	must return exactly to the initial condition $(x_0,y_0,0)$. By equivariance,
	this occurs if and only if
	\[
	(x_1,y_1,0)=S(x_0,y_0,0).
	\]
	Since $S(x_0,y_0,0)=(-y_0,-x_0,0)$, the stated conditions follow.
	
	Conversely, if these relations hold, the two trajectory segments are exchanged
	by $S$ and the orbit closes after two symmetric passages through $\Sigma$,
	yielding a periodic orbit. This proves that all periodic orbits in $\CLequi$
	are necessarily symmetric.
\end{proof}

\begin{remark}\label{rem:nonexistence_symmetry}
It is important to emphasize that the equivariance of the system does not, by itself, guarantee the existence of periodic orbits. However, part (a) of Theorem \ref{teo:main1} shows that, if periodic orbits exist, they must necessarily be symmetric with respect to the involution $S$. Moreover, such periodic orbits are also time-symmetric, since they are composed of two trajectory segments exchanged by the symmetry and traversed in opposite directions of the phase space. Hence, in the equivariant class $\CLequi$, symmetry is not a sufficient condition for the existence of periodic dynamics, but it fully determines the possible closure mechanism of trajectories.
\end{remark}

In view of Remark~\ref{rem:nonexistence_symmetry}, the characterization provided by
Theorem~\ref{teo:main1}(a) yields necessary geometric conditions for the closure of
trajectories, but not a sufficient existence criterion. We therefore proceed by
constructing suitable first integrals and restricting the return dynamics to the
corresponding invariant curve on \(\Sigma\). This reduction yields an analytic condition
which, combined with the symmetry characterization, provides a sufficient criterion for
the existence of a $T$-symmetric periodic orbit under the stated hypotheses.

\section{First Integrals and the Hyperbola}\label{sec:invcurve}
\subsection{Darboux polynomials}
\begin{proposition}\label{prop:firstintegral_X}\label{prop:firstintegral_Y}
Given $Z=(X,Y)\in\CLperp$, the following statements hold:
\begin{enumerate}
\item For the vector field $X$, the polynomials
$$f_1(x,y,z)=y^{2}+2 C \left(z +\dfrac{\Lambda}{C^{2}+1}\right) y +\left(C^{2}+1\right) z^{2}+\dfrac{2\Lambda\left(C^{2}-1\right)   z}{C^{2}+1}+\dfrac{\Lambda^{2}}{C^{2}+1}$$
and
$$f_2(x,y,z)=x-H\left(Az+y\right),$$
are Darboux polynomials with cofactors $2C$ and $A$, respectively. Consequently,
\begin{equation}\label{eq:firstintegral_X}
P_X(x,y,z)=f_1(x,y,z) f_2(x,y,z)^{-\frac{2C}{A}},
\end{equation}
is a first integral for $X$. Additionally,  \(r^X:=\left\{\left(x,y,z\right)\in\mathbb{R}^3\,|\, x=Hy\right\}\).

\item For the vector field $Y$, the polynomials
$$F_1(x,y,z)=x^{2}+2 c \left(z +\dfrac{\lambda}{c^{2}+1}\right) x +\left(c^{2}+1\right) z^{2}+\dfrac{2 \lambda  \left(c^{2}-1\right) z}{c^{2}+1}+\dfrac{\lambda^{2}}{c^{2}+1}$$
and
$$F_2(x,y,z)=y-h(az-x),$$
are Darboux polynomials with cofactors $2c$ and $a$, respectively. Consequently,
\begin{equation}\label{eq:firstintegral_Y}
P_Y(x,y,z)=F_1(x,y,z) F_2(x,y,z)^{-\frac{2c}{a}},
\end{equation}
is a first integral for $Y$. Additionally,  \(r^Y:=\left\{\left(x,y,z\right)\in\mathbb{R}^3\,|\, y=hx\right\}\).
\end{enumerate}
\end{proposition}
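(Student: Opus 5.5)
The plan is to verify the Darboux property of each polynomial by direct computation of the Lie derivative along the canonical form \eqref{eq:canonica_ortogonal}, and then to invoke Proposition~\ref{prop:firstintegral} to obtain the first integrals. For $X=(Ax-H((A-C)^2+1)z+H\Lambda,\ \Lambda-(1+C^2)z,\ 2Cz+y)$ I would start with the linear polynomial $f_2=x-H(Az+y)$, since it is the easiest case. Computing $X(f_2)=X_1-HA\,X_3-H\,X_2$ and collecting terms gives
\[
X(f_2)=Ax-HAy+Hz\big[-(A-C)^2-1-2AC+1+C^2\big]=Ax-HAy-HA^2z=A\,f_2 ,
\]
so $f_2$ is a Darboux polynomial with cofactor $A$. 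Its zero set is the focal plane $W^X$, and intersecting it with $\{z=0\}$ gives $r^X=\{x=Hy\}$.

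For $f_1$ I would use the fact that $f_1$ does not depend on $x$, and that $(\dot y,\dot z)=(\Lambda-(1+C^2)z,\ 2Cz+y)$ is a closed planar affine system with eigenvalues $C\pm i$. The plan is to translate to its equilibrium $(y^*,z^*)=\big(-2C\Lambda/(1+C^2),\ \Lambda/(1+C^2)\big)$ with $u=y-y^*$, $w=z-z^*$, so that $\dot u=-(1+C^2)w$ and $\dot w=u+2Cw$. Then I would check that $Q=u^2+2Cuw+(1+C^2)w^2$ satisfies $\dot Q=2C\,Q$, which is a short expansion. The main bookkeeping step is to show that $Q$, rewritten in $(y,z)$, coincides with $f_1$. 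The quadratic part is immediate. The constant term is $(y^*)^2+2Cy^*z^*+(1+C^2)(z^*)^2=\Lambda^2/(1+C^2)$. The linear terms $2C\Lambda y/(1+C^2)$ and $2\Lambda(C^2-1)z/(1+C^2)$ come out of $-2uy^*-2C(y^*z+z^*y)-2(1+C^2)zz^*$. This coefficient matching is the only place where errors are likely, so I would carry it out carefully. It is the step I expect to be the main, if modest, obstacle.

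With $k_1=2C$ and $k_2=A$, I choose $\lambda_1=1$ and $\lambda_2=-2C/A$. This is admissible since $A\neq0$, and it gives $\lambda_1k_1+\lambda_2k_2=0$. Proposition~\ref{prop:firstintegral} then yields that $P_X=f_1f_2^{-2C/A}$ is a first integral, on the region where $f_2\neq0$ (or with $|f_2|$ in place of $f_2$).

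For $Y=(\lambda-(1+c^2)z,\ ay-h((a-c)^2+1)z+h\lambda,\ 2cz+x)$ the structure is that of $X$ with the roles of $x$ and $y$ interchanged and $(A,C,H,\Lambda)\mapsto(a,c,h,\lambda)$. So $F_1$ follows verbatim from the computation for $f_1$ with $y$ replaced by $x$. For $F_2$ I would redo the one-line Lie derivative computation directly rather than trust the swap, because the sign of the $x$-term must be read off from it. The same cancellation as above gives cofactor $a$ for the linear form $y-h(az\pm x)$ with the sign dictated by the computation; the swap itself suggests $y-h(az+x)$. I would then state $r^Y$ as the trace of this plane on $\Sigma$, reconciling the sign with the paper's convention for $h$ if necessary. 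Proposition~\ref{prop:firstintegral} with exponents $1$ and $-2c/a$ finishes the proof.
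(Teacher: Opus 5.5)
Your route is the paper's route. The paper only asserts $X(f_1)=2Cf_1$ and $X(f_2)=Af_2$, applies Proposition~\ref{prop:firstintegral} with exponents $1$ and $-2C/A$, and says item~2 ``follows analogously''. Your computations for $f_2$ and, via the translated closed $(y,z)$-subsystem, for $f_1$ are correct and supply the verification the paper omits. Your remark about using $|f_2|$ for the real power is a fair refinement.

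You left one point open, and it has to be settled; it goes against the printed statement. The swap is an exact conjugacy, not a heuristic. With $P(x,y,z)=(y,x,z)$ one has $Y=P\circ X\circ P$ after renaming $(A,C,H,\Lambda)\mapsto(a,c,h,\lambda)$. Hence $f\circ P$ is a Darboux polynomial of $Y$ with the same cofactor whenever $f$ is one for $X$. This gives $F_1$ exactly as printed, but $F_2=f_2\circ P=y-h(az+x)$. Direct computation confirms it:
\[
Y\bigl(y-h(az+x)\bigr)=a\bigl(y-h(az+x)\bigr),\qquad
Y\bigl(y-h(az-x)\bigr)-a\bigl(y-h(az-x)\bigr)=2h\lambda-2hax-2h(1+c^2)z .
\]
A cofactor of a linear field is a constant, and the $y$-coefficient forces it to be $a$. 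So the printed $F_2=y-h(az-x)$ is not a Darboux polynomial of $Y$ for any $h\neq0$. The statement contains a sign misprint, which the paper's ``analogously'' passes over.

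This is not a matter of convention for $h$; the computation above holds for the given $Y$ and every $h\neq 0$. Nothing else needs reconciling either. The corrected plane $y=h(az+x)$ meets $\Sigma$ in exactly the stated line $r^Y=\{y=hx\}$, whereas the printed $F_2$ would give $y=-hx$. So state item~2 with $F_2=y-h(az+x)$ and $P_Y=F_1F_2^{-2c/a}$, and your proof is complete.
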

\begin{proof}
For item (1), the polynomials $f_1$ and $f_2$ satisfy $X(f_1)=2Cf_1$ and $X(f_2)=Af_2$. Moreover, $\mu_1=1$ and $\mu_2=-\dfrac{2C}{A}$ solve $\mu_1(2C)+\mu_2A=0$. By Proposition \ref{prop:firstintegral}, we obtain $P_X(x,y,z)=f_1(x,y,z) f_2(x,y,z)^{-\frac{2C}{A}}$ as a first integral of $X$.

The zero level set of $f_2$ defines the invariant plane \( W^X \), given by \( x - H(Az + y) = 0 \). Intersecting \( W^X \) with \( \Sigma = \{ z = 0 \} \), we obtain the line \( r^X \).

Item (2) follows analogously by replacing \((f_1,f_2,C,A,H,\Lambda,X)\) with \((F_1,F_2,c,a,h,\lambda,Y)\), which yields \eqref{eq:firstintegral_Y} and the line \(r^Y\).
\end{proof}

To characterize the algebraic curves on the switching manifold \(\Sigma\) that support the closing conditions, we start from the first-integral matching equations. 
According to part (a) of Theorem \ref{teo:main1}, a symmetric periodic orbit of the vector field \( Z \) intersects the plane \( \Sigma \) at the two points \( (x,y,0) \) and \( (-y,-x,0) \). Then, the first integrals must take equal values at these intersection points. Thus, \( P_X \) and \( P_Y \) must satisfy the following conditions:
\[
P_X(x,y,0) - P_X(-y,-x,0) = 0,
\]
\[
P_Y(x,y,0) - P_Y(-y,-x,0) = 0.
\]
Using the symmetry hypotheses of the family $\CLequi$, namely \(a=A\), \(c=C\), \(h=H\), and \(\lambda=-\Lambda\), these two conditions reduce to the same algebraic closure relation on \(\Sigma\).

\begin{lemma}\label{lemma:aminus2c}
Consider \( Z \in \CLequi \). If the real eigenvalue of \( DX \) is equal to minus twice the real part of the complex eigenvalue, then the conic
\begin{equation}\label{eq:algebraicurve_A=-2C}
\Gamma_1 := \left\{ (x,y,0) \in \mathbb{R}^3 \ \bigg| \ H(x^2 + y^2) - (H + 1)xy + \dfrac{2CH\Lambda(y - x)}{C^2 + 1} + \dfrac{\Lambda^2(H - 1)}{C^2 + 1} = 0 \right\},
\end{equation}
is an algebraic curve on \( \Sigma \). Furthermore,
\begin{enumerate}
\item[(a)] \( \Gamma_1 \) is a pair of lines if and only if \( H = 1 \);
\item[(b)] \( \Gamma_1 \) is a parabola if and only if \( H = -\dfrac{1}{3} \);
\item[(c)] \( \Gamma_1 \) is a hyperbola if and only if \( -\dfrac{1}{3} < H < 1 \);
\item[(d)] \( \Gamma_1 \) is an ellipse if and only if \( H < -\dfrac{1}{3} \) or \( H > 1 \).
\end{enumerate}
\end{lemma}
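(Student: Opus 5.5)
Since $A=-2C$, the exponent $-2C/A$ in \eqref{eq:firstintegral_X} equals $1$. The first integral $P_X=f_1f_2$ of Proposition~\ref{prop:firstintegral_X} is therefore a cubic polynomial. My plan is to write the closing relation $P_X(x,y,0)-P_X(-y,-x,0)=0$ explicitly, strip off a trivial linear factor, and classify the remaining conic by its discriminant.

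\textbf{Step 1: the cubic relation.} Put $\beta=2C\Lambda/(C^2+1)$ and $\gamma=\Lambda^2/(C^2+1)$. On $\Sigma$ we have
\[
f_1(x,y,0)=y^2+\beta y+\gamma,\qquad f_2(x,y,0)=x-Hy,
\]
and at the reflected point
\[
f_1(-y,-x,0)=x^2-\beta x+\gamma,\qquad f_2(-y,-x,0)=Hx-y.
\]
The closing relation is then the cubic equation
\[
Q(x,y):=(y^2+\beta y+\gamma)(x-Hy)-(x^2-\beta x+\gamma)(Hx-y)=0 .
\]
By the equivariance relations of Proposition~\ref{prop:equivariant}, $P_Y(x,y,z)=P_X(S(x,y,z))$ up to the obvious identification. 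Hence the $P_Y$ condition yields the same relation, and I only need to treat $Q$.

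\textbf{Step 2: remove the fixed-point line.} On $y=-x$ both products equal $(x^2-\beta x+\gamma)(1+H)x$, so $Q$ vanishes identically there. Thus $(x+y)$ divides $Q$. This factor corresponds to $\Fix(S)$, where the two crossing points $(x,y,0)$ and $(-y,-x,0)$ coincide, so it carries no genuine periodic orbit and is discarded.

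I will carry out the polynomial division, or equivalently match coefficients. The expected quotient, up to a nonzero constant factor, is the conic in \eqref{eq:algebraicurve_A=-2C}. As a consistency check, its quadratic part should be $Hx^2-(H+1)xy+Hy^2$. I also verify that the quadratic term in $(x,y)$ of the quotient reproduces the term $\Lambda^2(H-1)/(C^2+1)$. This bookkeeping is the only real computation, and it is routine.

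\textbf{Step 3: classification.} The quadratic part $Hx^2-(H+1)xy+Hy^2$ has discriminant
\[
(H+1)^2-4H^2=(1-H)(1+3H).
\]
This is positive exactly for $-1/3<H<1$, which gives the hyperbola case (c). It is negative for $H<-1/3$ or $H>1$, which gives the ellipse case (d). It vanishes at $H=1$ and at $H=-1/3$.

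At $H=1$ the constant term drops out. The conic becomes
\[
(x-y)^2-\beta(x-y)=(x-y)(x-y-\beta)=0,
\]
a pair of parallel lines, which is case (a).

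At $H=-1/3$ I must confirm that the conic is a genuine parabola and not degenerate. For this I compute the $3\times3$ determinant of the associated symmetric matrix and check that it is nonzero; this uses the visibility assumption $\Lambda\neq0$.

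\textbf{Main obstacle.} The delicate point is the non-degeneracy in cases (b)--(d). The full $3\times3$ determinant must be shown nonzero for $H\neq1$, so that a nondegenerate conic arises exactly when the discriminant condition holds and only $H=1$ yields lines. I would try to factor this determinant as $\Lambda^2$ times an expression in $H$ and $C$ that vanishes only at $H=1$.

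For $H<-1/3$ or $H>1$ the real locus might be empty, that is, an imaginary ellipse. I will note that the statement classifies only the type of the conic, determined by the discriminant together with the determinant condition.
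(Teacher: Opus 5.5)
Your Steps 1--3 follow the paper's own argument. With $A=-2C$ one has $P_X=f_1f_2$, the closing relation splits off the factor $x+y$, and the type of the conic is read off from $\Delta_1(H)=(1-H)(3H+1)$. Your bookkeeping is right: $Q=-(x+y)\,G$, where $G$ is the defining polynomial of $\Gamma_1$.

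The problem is your ``main obstacle'' paragraph. The determinant does \emph{not} vanish only at $H=1$, so that part of the plan would fail. In the coordinates $u=x-y$, $w=x+y$ the conic reads
\[
\tfrac{3H+1}{4}\,u^2+\tfrac{H-1}{4}\,w^2-H\beta u+\gamma(H-1)=0 .
\]
The determinant of its $3\times3$ matrix is
\[
\frac{(H-1)\Lambda^2}{4(C^2+1)^2}\,\bigl((2-s)H-s\bigr)\bigl((2+s)H+s\bigr),\qquad s=\sqrt{C^2+1}.
\]
This also vanishes at $H=-s/(2+s)$ and, for $C^2\neq3$, at $H=s/(2-s)$.

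Since $A=-2C\neq0$, we have $C\neq0$, so $s>1$. Both extra roots then lie in $\{H<-1/3\}\cup\{H>1\}$, and (a)--(c) are unaffected. At $H=-1/3$ the equation is $-\tfrac13 w^2+\tfrac{\beta}{3}u-\tfrac43\gamma=0$. This is a parabola precisely when $\beta\neq0$, so you need $C\neq0$ as well as $\Lambda\neq0$.

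Item (d), however, cannot be proved for genuine ellipses. At the two extra roots $\Gamma_1$ collapses to a single point. On part of the elliptic range it is empty: for $C=1$, $H=-1$ the equation becomes $(x-\Lambda/2)^2+(y+\Lambda/2)^2=-\Lambda^2/2$.

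So (d) is true only in the sense ``the quadratic part is definite.'' That is all the paper's proof uses: it decides every case from the sign of $\Delta_1(H)$ and asserts nondegeneracy at $H=-1/3$ without computation. You should adopt that reading explicitly rather than try to show nondegeneracy for all $H\neq1$.
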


\begin{proof}
By the matching conditions for symmetric crossings on \(\Sigma\), the closure equations are
\begin{equation}\label{eq:close_firstintegral_X}
P_X(x,y,0) - P_X(-y,-x,0) = 0,
\end{equation}
\begin{equation}\label{eq:close_firstintegral_Y}
P_Y(x,y,0) - P_Y(-y,-x,0) = 0.
\end{equation}
Under the resonant condition \(A=-2C\), a direct calculation yields
\[
\left(H(x^2 + y^2) - (H + 1)xy + \dfrac{2CH\Lambda(y - x)}{C^2 + 1} + \dfrac{\Lambda^2(H - 1)}{C^2 + 1} \right)(x + y) = 0.
\]
Hence, \(\Gamma_1\) is exactly the conic \eqref{eq:algebraicurve_A=-2C}. Its quadratic-part discriminant is
\[
\Delta_1(H)=(-(H+1))^2-4H\cdot H=-3H^2+2H+1=(1-H)(3H+1).
\]
\begin{itemize}
\item If \(H=1\), then \(\Delta_1(H)=0\) and the constant term also vanishes; thus \(\Gamma_1\) degenerates into a pair of lines.
\item If \(H=-\dfrac{1}{3}\), then \(\Delta_1(H)=0\) and the conic is non-degenerate, hence a parabola.
\item If \(-\dfrac{1}{3}<H<1\), then \(\Delta_1(H)>0\), so \(\Gamma_1\) is a hyperbola.
\item If \(H< -\dfrac{1}{3}\) or \(H>1\), then \(\Delta_1(H)<0\), so \(\Gamma_1\) is an ellipse.
\end{itemize}
\end{proof}

Note that the type of the conic depends on the slope parameter \(H\) associated with the line \(r^X\). In the next analysis, we will determine a critical value of \(H\) by analyzing the first coefficient of a time displacement map. For this critical value, the discriminant is positive, and therefore the curves under consideration are hyperbolas.

\section{Half-Return Times on Algebraic Curves}\label{subsec:hra}
The geometric characterization obtained in the previous section reduces the closing problem to points on the algebraic curves \(\Gamma_1\) and \(\Gamma_2\). We now complement this geometric step with an analytic one: the study of the half-return times associated with \(X\) and \(Y\) along these curves, in a neighborhood of infinity. This will provide the analytic framework used in the next section to define and analyze the time-matching (displacement) function.
\begin{lemma}\label{lema:half_return_delta_X}
Suppose that \( Z = (X, Y) \in \CLequi \). Let \( \Delta^{X}(x_0, y_0) \) be the half-return map associated with the vector field \( X \). Assume additionally that the real eigenvalue of \(DX\) satisfies \(A=-2C\), where \(C\) is the real part of the complex eigenvalues of \(DX\). If \(\Gamma_1\) is a hyperbola and \( (x_0, y_0, 0) \in \Gamma_1 \), then \( t^X(x_0,y_0) \) is analytic on \(\Gamma_1\) in a neighborhood of infinity.

\end{lemma}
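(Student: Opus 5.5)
The key observation is that, under the resonance \(A=-2C\), the return condition for \(X\) does not involve \(x\). In the canonical form \eqref{eq:canonica_ortogonal} the pair \((y,z)\) satisfies the closed planar linear system
\[
\dot y=\Lambda-(1+C^2)z,\qquad \dot z=2Cz+y .
\]
Its linear part has eigenvalues \(C\pm i\), and its equilibrium is \((y^*,z^*)=\bigl(-2C\Lambda/(C^2+1),\,\Lambda/(C^2+1)\bigr)\). The coordinate \(x\) is only driven by this subsystem. Hence the half-return time \(t^X(x_0,y_0)\), defined by the first positive zero of the third component of \(\varphi_X(t,(x_0,y_0,0))\) (equivalently by the closing equation of Definition~\ref{def:eq_fechamento}), depends only on \(y_0\). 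I would first write \(z(t)\) explicitly using \(e^{DX t}\), or directly from the planar system:
\[
z(t)-z^*=e^{Ct}\bigl(-z^*\cos t+\beta(y_0)\sin t\bigr),
\]
where \(\beta(y_0)\) is affine in \(y_0\) with leading coefficient \(1\), since \(\dot z(0)=y_0\). Because \(\dot z(0)=y_0>0\) for large \(y_0\), the orbit enters \(\Sigma^+\), and the half-return is well defined as long as a first positive zero of \(z\) exists.

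Next I would parametrize the relevant branch of \(\Gamma_1\) near infinity. The hyperbola condition of Lemma~\ref{lemma:aminus2c} is \(-1/3<H<1\). The asymptotic directions are \(x=my\), with \(m\) a root of \(Hm^2-(H+1)m+H=0\). For \(H\neq0\) these slopes are finite and nonzero, so along a branch with \(y_0\to+\infty\) one can use \(s=1/y_0\) as a local coordinate at infinity. Solving the conic equation for \(x_0\) by the Implicit Function Theorem (Theorem~\ref{teo:funcao_implicita}) gives
\[
x_0=m\,y_0+\kappa+O(s),
\]
analytic in \(s\). Although \(t^X\) does not depend on \(x_0\), this shows that analyticity in \(s\) is the same as analyticity on \(\Gamma_1\) near infinity.

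Now multiply the return equation \(z(t)=0\) by \(s\):
\[
G(t,s):=e^{Ct}\bigl(-z^*s\cos t+\beta(1/s)\,s\sin t\bigr)+z^*s=0,
\]
which is analytic in \((t,s)\) because \(\beta(1/s)s=1+O(s)\) is affine in \(s\). At \(s=0\) we get \(G(t,0)=e^{Ct}\sin t\). This has the simple root \(t=\pi\), with \(\partial_tG(\pi,0)=-e^{C\pi}\neq0\). Theorem~\ref{teo:funcao_implicita} then yields an analytic solution \(t(s)=\pi+\sum_{n\ge1}c_ns^n\) near \(s=0\).

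The main obstacle is showing that this branch is indeed the \emph{first} positive return, so that it coincides with \(t^X\). The difficulty is the trivial zero \(G(0,s)=0\), which persists for every \(s\) and could spawn spurious zeros near \(t=0\). I would handle it by writing \(G(t,s)=t\,K(t,s)\) with \(K\) analytic. Then
\[
K(t,0)=e^{Ct}\,\frac{\sin t}{t},
\]
which is bounded away from zero on \([0,\pi-\delta]\). By uniform continuity, \(K(t,s)\neq0\) there for all small \(|s|\), while near \(t=\pi\) the only zero is \(t(s)\). This gives \(t^X=t(s)\), analytic on \(\Gamma_1\) in a neighbourhood of infinity.
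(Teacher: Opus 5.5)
Your proposal is correct and follows essentially the same route as the paper. The paper also observes that the return time depends only on $y_0$, uses the second component of the closing equation (equivalent to your condition $z(t)=0$), substitutes $y_0=1/v_0$, multiplies through by $v_0$ to desingularize, and applies the analytic Implicit Function Theorem at $(t,v_0)=(\pi,0)$, where the $t$-derivative is nonzero; your extra steps (the analytic chart $s=1/y_0$ on the branch of $\Gamma_1$, and the factorization $G=tK$ showing that the analytic root is genuinely the first positive return) are points the paper leaves implicit, so your version is, if anything, more complete.
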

\begin{proof}
Under the given assumptions, we have that \( A = -2C \), where \( A \) is the real eigenvalue and \( C \) is the real part of the complex eigenvalue of \( DX \). According to Definition \ref{def:eq_fechamento}, the closing condition for the orbit segment of the vector field \( X \) is given by:
	\begin{equation}\label{eq:matriz_fundamental_superior_ortogonais_X}
		{\rm e}^{DX\,t^{X}}\left( \begin {array}{c}  -2Cx_0+\Lambda H\\ \noalign{\medskip}
		\Lambda\\ \noalign{\medskip}{ y_0}
		\end {array} \right)-\left( \begin {array}{c}  -2Cx_1+\Lambda H\\ \noalign{\medskip}
		\Lambda\\ \noalign{\medskip}{ y_1}
		\end {array} \right)=\left( \begin {array}{c}  0\\ \noalign{\medskip}
		0\\ \noalign{\medskip}0
		\end {array} \right).
	\end{equation}
In this case, the return time depends only on \(y_0\). Without loss of generality, we will parameterize \(\Gamma_1\) as a function of \(y_0\). Therefore, at this stage, it is not necessary to use parametrization of \(\Gamma_1\).  As we are interested in analyzing the return time of orbits near infinity, we introduce the change of variables
\[
t^{X} = \tau^{X} - \pi, \quad y_0 = \left(v_0\right)^{-1}.
\]
Note that the return time \( t^X \) depends only on the second coordinate of the initial condition, \( y_0 \). Thus, if \( y_0 \) tends to infinity, then \( v_0 \) tends to zero and \( t^X \) approaches \( \pi \). Consequently, \( \tau^{X} \) approaches zero. In this context, our first goal is to prove that \( \tau^{X} \) is an analytic function of \( v_0 \) near zero.

Considering the second component of equation \eqref{eq:matriz_fundamental_superior_ortogonais_X} and substituting the new variables into it, we desingularize the expression and obtain the following equation:

\begin{equation}\label{eq:tempo_retorno_x_ortogonal}
F_1\left(v_0,\tau^{X}\right)	=0,
\end{equation}
where $F_1$ represents the given analytical function
$$
F_1\left(v_0,\tau^{X}\right)=\left(\left(\cos \! \left(\tau^{X} \right)-C \sin \! \left(\tau^{X} \right)\right) \Lambda  v_0 -\left(C^{2}+1\right) \sin \! \left(\tau^{X} \right)\right) {\mathrm e}^{C \left(\tau^{X} +\pi \right)}+\Lambda  v_0.
$$
We observe that $F_1\left(0,0\right)=0$ and $\dfrac{\partial F_1}{\partial\tau^{X}}\left(0,0\right)=-\left(C^{2}+1\right){\mathrm e}^{\pi \, C}\neq0$. 

Then, from Implicit Function Theorem \ref{teo:funcao_implicita} we conclude that $\tau^{X}$ is an analytic function of $v_0$. Furthermore, $F_1\left(v_0,\tau^{X}\left(v_0\right)\right)=0$ in a neighborhood of the point $\left(0,0\right)$. 

As we are interest in the Taylor expansion of \(\tau^X\), consider $\tau^{X}=\gamma_{1}^{X}v_0+\gamma_{2}^{X}v_0^2+\cdots$. Then, we have that
	\begin{align*}
		{\rm e}^{C \left( \tau^{X}+\pi \right)}=&{\rm e}^{\pi \, C}\Bigg({\dfrac {C \left( C\left(\gamma_{1}^{X}\right)^{2}+2\,\gamma_{2}^{X} \right) \left(v_0\right)^{2}}{2}}+		C \gamma_{1}^{X}v_0+1\Bigg)+\cdots,\\
		\cos\left(\tau^{X}\right)=&1-{\dfrac {\left(\gamma_{1}^{X}\right)^{2}\left(v_0\right)^{2}}{2}}-+\cdots,\\
		\sin\left(\tau^{X}\right)=&\gamma_{1}^{X} v_0+\gamma_{2}^{X}\left(v_0\right)^{2}+\cdots.
	\end{align*}
	Substituting these expressions in equation \eqref{eq:tempo_retorno_x_ortogonal},
	we obtain
	$$
	M_1v_0+M_2\left(v_0\right)^2+\cdots=0,
	$$
	where	$
	M_1=\left(\dfrac{1+{{\rm e}^{-\pi\,C}}}{{C}^{2}+1}\right) \Lambda-\gamma_{1}^{X}
$
and
$
	M_2=C\left(\gamma_{1}^{X}\right)^{2}+\gamma_{2}^{X}.
	$

Thus, given that $M_1=M_2=0,$ we conclude that
\begin{equation}\label{eq:coef_tx1}
\gamma_{1}^{X}=\dfrac{\left(1+{{\rm e}^{-\pi\, C}} \right) \Lambda}{{C}^{2}+1}
\end{equation}
and
\begin{equation}\label{eq:coef_tx2}
\gamma_{2}^{X}=-C\left(\dfrac{\left(1+{{\rm e}^{-\pi\, C}} \right) \Lambda}{{C}^{2}+1}\right)^2.
\end{equation}
\end{proof}

\begin{lemma}\label{lema:half_return_delta_Y}
Suppose that \( Z = (X, Y) \in \CLequi \). Let \( \Delta^{Y}(x_0, y_0) \) denote the half-return map associated with the vector field \( Y \). Assume additionally that the real eigenvalue of \(DX\) satisfies \(A=-2C\), where \(C\) is the real part of the complex eigenvalues of \(DX\). If \(\Gamma_1\) is a hyperbola and \( (x_0, y_0, 0) \in \Gamma_1 \), then the return time \( t^Y(x_0, y_0) \) is analytic on \(\Gamma_1\) in a neighborhood of infinity.
\end{lemma}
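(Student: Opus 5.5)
The plan is to mirror the proof of Lemma~\ref{lema:half_return_delta_X}, using either the symmetry relation $\Phi_Y(t)=[S]\Phi_X(t)[S]$ from \eqref{eq:fundamental_conjugacy} or a direct computation with the canonical form. Under the hypotheses of $\CLequi$ (Proposition~\ref{prop:equivariant}) we have $a=A=-2C$, $c=C$, $h=H$, $\lambda=-\Lambda$. On $\Sigma$ this gives $Y(x_0,y_0,0)=(-\Lambda,\,-2Cy_0-H\Lambda,\,x_0)$. The closing equation of Definition~\ref{def:eq_fechamento} for $Y$ is
\[
{\rm e}^{DY(-t^{Y})}\begin{pmatrix}-\Lambda\\ -2Cy_0-H\Lambda\\ x_0\end{pmatrix}=\begin{pmatrix}-\Lambda\\ -2Cy_2-H\Lambda\\ x_2\end{pmatrix}.
\]
Its first component involves only the $(x,z)$-block of $DY$, i.e. the focal rotation, together with the unknowns $x_0$ and $t^Y$. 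So, exactly as in the $X$ case where only $y_0$ appeared, the $Y$-return time depends only on $x_0$. Using $[S]$, this first component is precisely the second component of the $X$-equation with $y_0$ replaced by $-x_0$, $\Lambda$ by $-\Lambda$, and $t$ by $-t^Y$.

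Next I would use the hyperbola $\Gamma_1$ of Lemma~\ref{lemma:aminus2c}. Since $-\tfrac13<H<1$, $\Gamma_1$ has two asymptotic directions given by the roots of $H m^2-(H+1)m+H=0$ with $m=x/y$. These roots are real, nonzero and positive for $0<H<1$. Along a branch going to infinity with $y_0\to+\infty$, $x_0$ is therefore an analytic function of $v_0=y_0^{-1}$ of the form $x_0=m/v_0+\beta+O(v_0)$. This comes from solving the conic for $x_0 v_0$ by the Implicit Function Theorem~\ref{teo:funcao_implicita} at a simple root $m$, which is simple because the discriminant $\Delta_1(H)>0$. Equivalently, I set $u_0=x_0^{-1}$, which is an analytic function of $v_0$ with $u_0\sim v_0/m$. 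Then I substitute $t^Y=\tau^Y-\pi$ (or $\tau^Y+\pi$, chosen to match the sign convention of the backward flow) and multiply through by $u_0$ to desingularize. This yields an analytic equation $G_1(u_0,\tau^Y)=0$ with the same structure as $F_1$:
\[
G_1(u_0,\tau^Y)=\big((\cos\tau^Y - C\sin\tau^Y)(-\Lambda)u_0 - (C^2+1)\sin\tau^Y\big){\rm e}^{C(\tau^Y+\pi)}-\Lambda u_0,
\]
up to the sign changes induced by the reversed time. I then check that $G_1(0,0)=0$ and $\partial_{\tau^Y}G_1(0,0)=\pm(C^2+1){\rm e}^{\pi C}\neq 0$. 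Theorem~\ref{teo:funcao_implicita} then gives $\tau^Y$ as an analytic function of $u_0$, and composing with the analytic map $v_0\mapsto u_0(v_0)$ gives analyticity on $\Gamma_1$ near infinity. Finally, I would compute the first two coefficients $\gamma_1^Y,\gamma_2^Y$ by the same series substitution as for $X$. These are needed later for the time-matching function.

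The main obstacle is the bookkeeping of signs and branches. One must check that on the chosen branch of $\Gamma_1$ the sign of $x_0$ makes the backward $Y$-orbit actually lie in $z<0$ and return near time $\pi$, i.e. that $x_0\to\pm\infty$ with the correct sign for visibility. One must also check that the reparametrization $u_0(v_0)$ is analytic at $v_0=0$, which needs $m\neq0$ and the root $m$ to be simple. I would handle the first point by noting that $z$ evolves as $\dot z=2Cz+x$, so the $Y$-trajectory enters $\Sigma^-$ for $x_0<0$ and exits for $x_0>0$, and then select the branch accordingly. I would handle the second point from $\Delta_1(H)>0$ together with $H\neq0$.
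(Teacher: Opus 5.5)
Your proposal is correct and follows the paper's proof in substance. Both arguments isolate the first component of the $Y$ closing equation (which involves only $x_0$ and $t^Y$), parametrize the chosen branch of $\Gamma_1$ near infinity by $v_0=y_0^{-1}$, desingularize, and apply the analytic Implicit Function Theorem. The only difference is that you pass through $u_0=x_0^{-1}$, made analytic in $v_0$ by the Implicit Function Theorem at the simple root $m$ of $Hm^2-(H+1)m+H=0$, and reuse $F_1$ via $[S]$; the paper instead substitutes the explicit square-root formula for the branch directly to obtain $F_2(v_0,\tau^Y)$.

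There is one slip, and it does not affect analyticity. That equation is linear homogeneous in the pair (initial coordinate, $\Lambda$), so the $[S]$-reduction replaces $y_0$ by $-x_0$ with $\Lambda$ unchanged, or equivalently $y_0\mapsto x_0$ and $\Lambda\mapsto-\Lambda$; it does not make both sign changes at once. Your displayed $G_1$ in fact encodes the correct version. In addition, for the backward return ($-t^Y\approx-\pi$) the derivative is $\pm(C^2+1){\rm e}^{-\pi C}$ rather than $\pm(C^2+1){\rm e}^{\pi C}$. These signs matter only when you compute $\gamma_1^Y$ and $\gamma_2^Y$.
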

\begin{proof}
Since \(Z\in\CLequi\), Proposition~\ref{prop:equivariant} yields \(c=C\), \(a=A\), \(h=H\), and \(\lambda=-\Lambda\). Together with the spectral assumption \(A=-2C\), we obtain \(a=-2C\).

In this case, the return time depends on \(x_0\). Since \(\Gamma_1\) is parameterized as a function of \(y_0\), we will use a parametrization of one branch of \(\Gamma_1\). We observe that the discriminant of the conic \(\Gamma_1\), denoted by \(D(\Gamma_1)\), is positive. Without loss of generality, we assume that \(0 < H < 1\) and restrict our analysis to the branch of the hyperbola \(\Gamma_1\) described by:
\[
x = \dfrac{(H + 1) y}{2H} + \dfrac{\Lambda C}{C^{2}+1} + \sqrt{\dfrac{ \Lambda \left((C^{2}+1)(\Lambda + C(1 - H)y) - \Lambda H \right)}{ H (C^{2}+1)^2} + \dfrac{D(\Gamma_1)}{4H^2}},
\]
on which the initial point will be chosen for the subsequent analysis, since the possible periodic orbits of our system, given by~\eqref{eq:canonica_ortogonal} under the hypotheses of our lemma, must intersect the switching manifold \(\Sigma\) on \(\Gamma_{1}\).

The conic \( \Gamma_1 = 0 \) intersects the coordinate axes at the following points:
\[
\left( \frac{(C H + \sqrt{H(C^2 + 1 - H)}) \Lambda}{H(C^2 + 1)},\, 0 \right), \quad
\left( \frac{(C H - \sqrt{H(C^2 + 1 - H)}) \Lambda}{H(C^2 + 1)},\, 0 \right),
\]
\[
\left( 0,\ -\frac{(C H - \sqrt{H(C^2 + 1 - H)}) \Lambda}{H(C^2 + 1)} \right), \quad
\left( 0,\ -\frac{(C H + \sqrt{H(C^2 + 1 - H)}) \Lambda}{H(C^2 + 1)} \right).
\]

These intercepts are consistent with the branch selection adopted above and justify
choosing initial points in the first quadrant, corresponding to the branch of
\( \Gamma_1 \) considered as \( y \to +\infty \) (see Fig. \ref{fig:hiperbole}).
The asymptotic direction of this branch is computed later through the limit
\(y_0/x_0(y_0)\), so we do not use it at this stage. Furthermore, the hyperbola does
not intersect the lines \( r^X \) and \( r^Y \), since these lines are contained in the
invariant linear manifolds associated with the respective vector field.
\begin{figure}[h!]	
\begin{center}
	\begin{overpic}[scale=0.09]{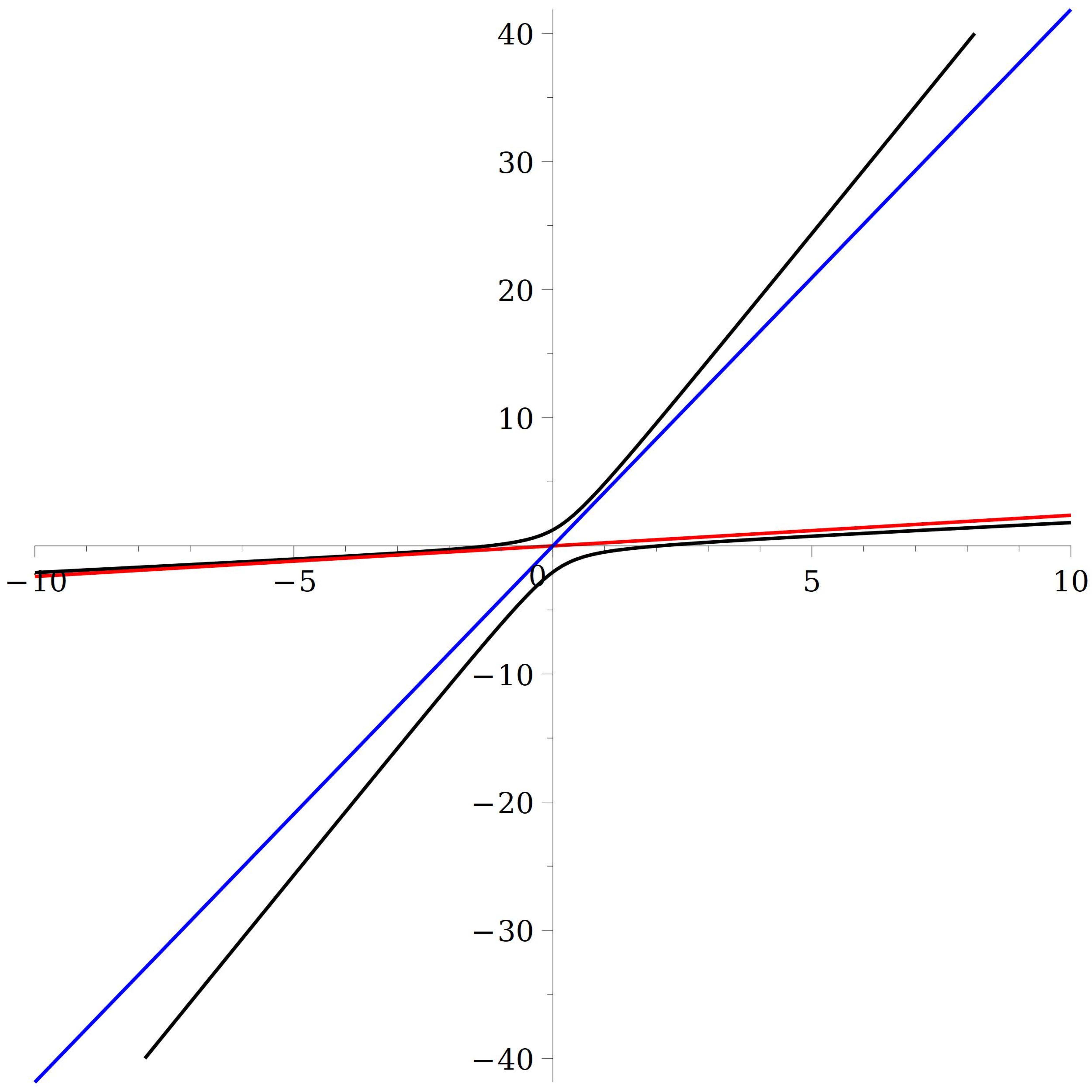}
\put(90,85){\textcolor{blue}{$r^{X}$}}
\put(90,55){\textcolor{red}{$r^{Y}$}}
	\put(92,42){$x$}
	\put(42,92){$y$}
	\end{overpic}
	\end{center}
		\caption{Representation of the hyperbola \( \Gamma_1 = 0 \) with the lines \(r^{X}\) and \(r^{Y}\).}
\label{fig:hiperbole}
	\end{figure}

The methodology employed in the previous proof will be replicated step by step for the bottom half-space $\Sigma^{-}$, starting with the equation
	\begin{equation}\label{eq:matriz_fundamental_superior_ortogonais_Y}
		{\rm e}^{DY\,\left(-t^{Y}\right)}
\left( 
\begin {array}{c}  
	- \Lambda\\ 
\noalign{\medskip}
	-2 C y_0-\Lambda H\\ 
\noalign{\medskip}
	x_0
\end {array} 
\right)
-
\left( 
\begin {array}{c}  
	- \Lambda\\ 
\noalign{\medskip}
	-2 C y_2-\Lambda H\\ 
\noalign{\medskip}
	x_2
\end {array} 
\right)
=\left( \begin {array}{c}  0\\ \noalign{\medskip}
		0\\ \noalign{\medskip}0
		\end {array} \right).
	\end{equation}
with the following transformations:
\[
y_0 = \left(v_0\right)^{-1}, \quad \tau^{Y} = -\left(t^{Y} + \pi\right), 
\]
\[
x_0 = \dfrac{H + 1}{2H v_0} + \dfrac{\Lambda C}{C^{2} + 1} + \sqrt{\dfrac{\Lambda \left((C^{2} + 1) \left(\Lambda + \dfrac{C (1 - H)}{v_0} \right) - \Lambda H \right)}{H (C^{2} + 1)^2} + \dfrac{D\left(\Gamma_1\right)}{4 H^2 (v_0)^2}},
\]
where \(D\! \left(\Gamma_{1}\right)\) is the discriminant of the hyperbole \(\Gamma_{1}\).

Note that the return time \( t^Y \) originally depends only on the first coordinate of the initial condition, \( x_0 \). After the coordinate change used to select a point on the hyperbola, the return time becomes a function of the new variable \( v_0 \). In particular, as \( v_0 \) tends to zero, \( t^Y \) approaches \( -\pi \). Consequently, \( \tau^{Y}  \) tends to zero.

Considering the first component of equation \eqref{eq:matriz_fundamental_superior_ortogonais_Y} and substituting the new variables into it, we desingularize the expression and obtain the following equation:
\begin{equation}\label{eq:tempo_retorno_y_ortogonal}
F_2\left(v_0,\tau^{Y}\right)	=0,
\end{equation}

where \(F_2\) represents the given analytical function

\begin{align*}
F_2(v_0,\tau^Y)=&\Bigg(\left(C^{2}+1\right) \Bigg(1+\Bigg(\sqrt{\frac{4 \left(C^{2}-H +1\right) \left(\Lambda v_0\right)^{2}}{\left(H \left(C^{2}+1\right)\right)^{2}}+\frac{4 C \Lambda  \left(1-H\right) v_0}{\left(C^{2}+1\right) H}+\frac{D\! \left(\Gamma_{1}\right)}{H^{2}}}\\
&+1\Bigg) H \Bigg) \sin \! \left(\tau^{Y}\right)-2 \Lambda  v_0 \cos \! \left(\tau^{Y}\right) H \Bigg) {\mathrm e}^{-C \left(\tau^{Y}+\pi \right)}-2 H \Lambda  v_0.
\end{align*}
We observe that $F_2\left(0,0\right)=0$ and
\[
\dfrac{\partial F_2}{\partial t^Y}\left(0,0\right)=\frac{\left(C^{2}+1\right) \left(\sqrt{\dfrac{D(\Gamma_{1})}{H^{2}}}\, H +H +1\right) {\mathrm e}^{-C \pi}}{2 H}.
\]
As \(\dfrac{\partial F_2}{\partial t^Y}(0,0) \neq 0\) for \(H \neq 0\), it follows from the Implicit Function Theorem \ref{teo:funcao_implicita} that \(t^Y\) is an analytic function of \(v_0\). 
Therefore, $\tau^Y\left(v_0\right)=\displaystyle\sum_{n\geq1}\gamma^{Y}_{n}\left(v_0\right)^{n}$ defines an analytic function on \(\Gamma_1\) for \(v_0\) near zero. 

Straightforward calculations show that
\begin{equation}\label{eq:coef_ty1}
\gamma_{1}^Y=\frac{2 H \Lambda  \left({\mathrm e}^{C \pi}+1\right)}{\left(C^{2}+1\right) \left(\sqrt{\dfrac{D\left(\Gamma_{1}\right)}{H^{2}}}\, H +H +1\right)}
\end{equation}
and
\begin{equation}\label{eq:coef_ty2}
\gamma_{2}^Y=-\frac{2 C \,H^{2} \Lambda^{2} \left({\mathrm e}^{C \pi}+1\right) \left(\sqrt{\dfrac{D\left(\Gamma_{1}\right)}{H^{2}}}\, H -3 \left(3 H +1\right) {\mathrm e}^{C \pi}\right)}{\left(C^{2}+1\right)^{2} \left(3 H +1\right) \left(\left(H^{2}+H \right) \sqrt{\dfrac{D\left(\Gamma_{1}\right)}{H^{2}}}-H^{2}+2 H +1\right)}.
\end{equation}

\end{proof}
\section{The Time-Matching Function}\label{sec:time_matching}

In this section, we investigate the difference between the return times \(t^X\) and \(t^Y\) associated with the half-return Poincaré maps of the vector fields \(X\) and \(Y\), respectively. The periodic orbit, when it exists, intersects the switching manifold \(\Sigma\) at two points lying on the conic \(\Gamma\) given by Equation \ref{eq:algebraicurve_A=-2C}. Due to the T-symmetry of the periodic orbit, the condition for periodicity reduces to the vanishing of the difference between these return times. This observation motivates the definition of a time-matching function, whose zeros characterize periodic behavior in the system.

\begin{definition}\label{def:time_matching}
Assume the hypotheses of Lemmas \ref{lema:half_return_delta_X} and \ref{lema:half_return_delta_Y}. Let \( \tau^{X}(v_0) \) and \( \tau^{Y}(v_0) \) denote the flight times at infinity along an interval \(0\in I\subset \Gamma_1\cup\Gamma_2\) of an orbit under the vector fields \( X \) and \( Y \), respectively. We define the \emph{time-matching function} \( \tau : I \to \mathbb{R} \) by
\begin{equation}\label{eq:time_matching}
\tau(v_0) = \tau^{X}(v_0) - \tau^{Y}(v_0).
\end{equation}
In particular, we denote the first two coefficients of the Taylor expansion of \( \tau(v_0) \) by \( \gamma_1 = \gamma_{1}^X - \gamma_{1}^Y \) and \( \gamma_2 = \gamma_{2}^X - \gamma_{2}^Y \), where \(\gamma_{1}^X\), \(\gamma_{2}^X \), \(\gamma_{1}^Y\) and \(\gamma_{2}^Y\) are given by \eqref{eq:coef_tx1}, \eqref{eq:coef_tx2}, \eqref{eq:coef_ty1}, \eqref{eq:coef_ty2}, respectively. 
\end{definition}
With this definition, we are now ready to prove the final statement of our main result.
\begin{proof}[Proof of part (b) of Theorem \ref{teo:main1}]

Suppose \(A=-2C\) and consider the critical point given by 
\[
\overline{H} = \frac{1}{2 \cosh \! \left(\pi C  \right)-1},
\] 
which satisfies the inequality
\[
-\dfrac{1}{3} < \overline{H} < 1,
\]
for all \(C \neq 0\). Therefore, under this condition, the discriminant \(D(\Gamma_1)\) is positive, confirming that \(\Gamma_1\) is a hyperbola.
Substituting this value into the coefficients \(\gamma_{1}\) and \(\gamma_{2}\), given by Definition \ref{def:time_matching}, we conclude that
\[
\gamma_{1}=0
\]
and 

\[
\gamma_{2} = 
\begin{cases}
\displaystyle -\frac{C \,\Lambda^{2} \left(1+2 \,{\mathrm e}^{-C \pi}-{\mathrm e}^{2 C \pi}+{\mathrm e}^{-2 C \pi}+{\mathrm e}^{4 C \pi}+2 \,{\mathrm e}^{3 C \pi}\right)}{\left(C^{2}+1\right)^{2}}, & \text{if } C < 0, \\[1em]
\displaystyle -\frac{2 \Lambda^{2} C \left({\mathrm e}^{-C \pi}+1+{\mathrm e}^{-2 C \pi}\right)}{\left(C^{2}+1\right)^{2}}, & \text{if } C > 0.
\end{cases}
\]
For \( C < 0 \), we aim to prove that the equation \( \gamma_{2} = 0 \) has no solution. By multiplying both sides of the equation by \( \mathrm{e}^{2C \pi} \) and setting \( \mathrm{e}^{C \pi} = x \), we obtain the equivalent equation \( p(x) = 0 \), where
\[
p(x) = x^6 + 2x^5 - x^4 + x^2 + 2x + 1.
\]
Our goal is to show that \( p(x) \) has no positive real roots, which implies \( \gamma_2 \neq 0 \). The roots of $p(x)$ are approximately given by

\[
x\approx-2.39314; -0.567069; -0.384611 \pm 0.681543 i; 0.864713 \pm 0.674897 i.
\]



Hence, the polynomial \( p(x) \) has no positive real roots, and as a result,
\[
\gamma_2 \neq 0 \quad \text{for all } C \neq 0.
\]

By applying the Weierstrass Preparation Theorem \ref{teo:teorema_preparacao_weierstrass}, we conclude that there exist analytic functions \( \delta_1(H) \) and \( \overline{\tau}(v_0, H) \neq 0 \), defined in a neighborhood \( U_0 \) of \( \overline{H} \). Consequently, the time-matching function \( \tau(v, H) \) admits the factorization:
\[
\tau(v, H) = (\delta_1(H) + v)\, \overline{\tau}(v, H).
\]

Under these conditions, \( \tau(v, H) \) has at most one positive zero, which corresponds to the existence of at most one time symmetric limit cycle. This zero is isolated due to the analyticity of the function.
\end{proof}

\section{Stability of limit cycle}
\label{subsec:stability_saltation}

For a symmetric limit cycle, one has \(t_X=t_Y=T/2\). Substituting \eqref{eq:fundamental_conjugacy} into \eqref{eq:monodromy_saltation}, we obtain
\begin{equation}\label{eq:monodromy_equivariant_reduction}
	M
	=
	S_{Y\to X}(p_0)\,[S]\,\Phi_X(T/2)\,[S]\,
	S_{X\to Y}(p_1)\,\Phi_X(T/2).
\end{equation}
Hence the spectral analysis of \(M\) reduces to a single half-return variational flow \(\Phi_X(T/2)\), together with the saltation corrections at the two crossings.

To specialize this identity to the symmetric limit cycle under the hypothesis \(A=-2C\), we evaluate the saltation matrices at the crossing points
\[
p_0=(x_0,y_0,0),
\qquad
p_1=(-y_0,-x_0,0).
\]

\begin{corollary}\label{cor:detM_geometry_A_minus_2C}
Assume that the real eigenvalue is minus twice the real part of the complex
conjugate eigenvalues. At the periodic-orbit crossing points
\(p_0=(x_0,y_0,0)\) and \(p_1=(-y_0,-x_0,0)\) of the symmetric cycle, one has
\begin{equation}\label{eq:detM_y0x0}
\det(M)=\left(\frac{y_0}{x_0}\right)^2.
\end{equation}
\end{corollary}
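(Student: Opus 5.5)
The plan is to compute \(\det(M)\) directly from the factorization \eqref{eq:monodromy_equivariant_reduction}, using multiplicativity of the determinant, so that
\[
\det(M)=\det\bigl(S_{Y\to X}(p_0)\bigr)\,\det([S])^2\,\det\bigl(\Phi_X(T/2)\bigr)^2\,\det\bigl(S_{X\to Y}(p_1)\bigr).
\]
Each factor will be evaluated separately. Two of them are immediate. First, \([S]\) is an involution, so \(\det([S])^2=1\). Second, by Liouville's formula (or simply \(\det e^{B}=e^{\operatorname{tr}B}\)),
\[
\det\Phi_X(T/2)=e^{\operatorname{tr}(DX)\,T/2}.
\]
The eigenvalues of \(DX\) are \(A\) and \(C\pm i\), so \(\operatorname{tr}(DX)=A+2C\). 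This vanishes under the resonance hypothesis \(A=-2C\), hence \(\det\Phi_X(T/2)=1\). This is exactly where the resonance enters: the smooth flows are volume preserving, and the whole determinant comes from the switching corrections.

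For the saltation matrices, both have the rank-one form \(I+u\,e_3^T\). By the matrix determinant lemma, \(\det(I+u\,e_3^T)=1+e_3^Tu\). I will apply this to each saltation matrix in turn.

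\textbf{Transition \(X\to Y\) at \(p_1\).} Here
\[
\det S_{X\to Y}(p_1)=1+\frac{Yf-Xf}{Xf}=\frac{Yf(p_1)}{Xf(p_1)}.
\]

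\textbf{Transition \(Y\to X\) at \(p_0\).} Here
\[
\det S_{Y\to X}(p_0)=1-\frac{Yf-Xf}{Yf}=\frac{Xf(p_0)}{Yf(p_0)}.
\]

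It then remains to read off the Lie derivatives from the canonical form \eqref{eq:canonica_ortogonal} on \(\Sigma\). There \(Xf(x,y,0)=y\) and, in the equivariant class, \(Yf(x,y,0)=x\).

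\textbf{At \(p_0\).} \(Xf(p_0)=y_0\) and \(Yf(p_0)=x_0\), so the first factor is \(y_0/x_0\).

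\textbf{At \(p_1=(-y_0,-x_0,0)\).} \(Xf(p_1)=-x_0\) and \(Yf(p_1)=-y_0\), so the second factor is \((-y_0)/(-x_0)=y_0/x_0\).

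Multiplying the four factors gives \(\det(M)=(y_0/x_0)^2\).

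The only delicate point is consistency of conventions. I have to check that the saltation formulas are evaluated with \(X\) and \(Y\) at the correct crossing points, and that the ratios are well defined. The latter follows from \(p_0,p_1\in\Sigma^{\rm c}\), where \(x_0y_0\neq0\) because the crossing condition \(Xf\,Yf>0\) reads \(x_0y_0>0\). I expect this bookkeeping to be the main (minor) obstacle. Beyond that, the argument is routine.
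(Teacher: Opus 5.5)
Your proposal is correct and follows the paper's proof essentially step by step. The resonance \(\operatorname{tr}(DX)=A+2C=0\) makes the smooth variational flows unimodular, and the two saltation determinants \(Xf(p_0)/Yf(p_0)\) and \(Yf(p_1)/Xf(p_1)\) each equal \(y_0/x_0\). The only cosmetic differences are that you go through the equivariant factorization with \([S]\), using \(\det([S])^2=1\), rather than \(\det\Phi_Y=1\) directly, and that you derive the saltation determinants from the matrix determinant lemma, which the paper simply states.
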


\begin{proof}
In the parameter regime \(A=-2C\), one has
\(\operatorname{tr}(DX)=A+2C=0\), so
\[
\det\!\bigl(\Phi_X(T/2)\bigr)=e^{\operatorname{tr}(DX)T/2}=1,
\qquad
\det\!\bigl(\Phi_Y(T/2)\bigr)=1.
\]
Consequently, the determinant of \(M\) satisfies
\begin{equation}\label{eq:det_monodromy_saltation}
\det(M)=\det\!\bigl(S_{Y\to X}(p_0)\bigr)\det\!\bigl(S_{X\to Y}(p_1)\bigr),
\qquad
\det\!\bigl(S_{\alpha\to\beta}(p)\bigr)=
\frac{\nabla f(p)^T\beta(p)}{\nabla f(p)^T\alpha(p)}.
\end{equation}

Since \(f(x,y,z)=z\), we have \(\nabla f=(0,0,1)^T\), so
\(\nabla f(p)^T F\) is the \(z\)-component of the corresponding vector field.
From the canonical form~\eqref{eq:canonica_ortogonal}, at \(z=0\),
\[
X_3(x,y,0)=y,
\qquad
Y_3(x,y,0)=x.
\]
At \(p_0\), the crossing is from \(Y\) to \(X\), hence
\[
\det\!\bigl(S_{Y\to X}(p_0)\bigr)=
\frac{\nabla f(p_0)^T X(p_0)}{\nabla f(p_0)^T Y(p_0)}
=
\frac{y_0}{x_0}.
\]
At \(p_1=(-y_0,-x_0,0)\), the crossing is from \(X\) to \(Y\), hence
\[
\det\!\bigl(S_{X\to Y}(p_1)\bigr)=
\frac{\nabla f(p_1)^T Y(p_1)}{\nabla f(p_1)^T X(p_1)}
=
\frac{-y_0}{-x_0}
=
\frac{y_0}{x_0}.
\]
Substituting these two identities into \eqref{eq:det_monodromy_saltation}
gives \eqref{eq:detM_y0x0}.
\end{proof}

We now state a result for the whole symmetric class \(Z\in\CLequi\), expressing the
transverse quadratic factor of the characteristic polynomial in terms of matrix invariants.

\begin{lemma}[Transverse characteristic polynomial]\label{lem:transverse_quadratic}
Consider a crossing periodic orbit of a vector field \(Z\in\CLequi\), and let \(M\)
be its saltation-corrected monodromy matrix. In this symmetric class, the
characteristic polynomial factors as \(p_M(\mu)=(\mu-1)q(\mu)\), where
\(q\) is the transverse quadratic polynomial
\[
q(\mu)=\mu^2-(\operatorname{tr}(M)-1)\mu+\det(M).
\]
\end{lemma}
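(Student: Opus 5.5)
The plan has two steps. First, show that \(\mu=1\) is a root of the characteristic polynomial \(p_M(\mu)=\det(\mu I-M)\). Then identify the quotient \(q=p_M/(\mu-1)\) from the coefficients of the cubic.

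For a \(3\times 3\) matrix we have the standard expansion
\[
p_M(\mu)=\mu^3-\operatorname{tr}(M)\,\mu^2+\sigma_2(M)\,\mu-\det(M),
\]
where \(\sigma_2(M)\) is the sum of the principal \(2\times 2\) minors.

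For the root \(\mu=1\), I would invoke the flow-invariance identity \eqref{eq:mapeamento_matriz_fundamental} together with its piecewise counterpart, as recalled in Section~\ref{sec:dvf}. Concretely, I check the factors of \eqref{eq:monodromy_saltation} in order, starting from \(X(p_0)\):
\begin{itemize}
\item \(\Phi_X(t_X)X(p_0)=X(p_1)\), by the closing equation of Definition~\ref{def:eq_fechamento};
\item \(S_{X\to Y}(p_1)X(p_1)=X(p_1)+(Y-X)(p_1)\,\frac{e_3^TX(p_1)}{Xf(p_1)}=Y(p_1)\), since \(e_3^TX=Xf\);
\item \(\Phi_Y(t_Y)Y(p_1)=Y(p_0)\), again by the closing equation;
\item \(S_{Y\to X}(p_0)Y(p_0)=Y(p_0)-(Y-X)(p_0)=X(p_0)\).
\end{itemize}
Hence \(MX(p_0)=X(p_0)\). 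Since \(p_0\) lies in the crossing region, \(X(p_0)\neq 0\), so \(1\) is an eigenvalue of \(M\) and \(p_M(1)=0\). In the equivariant class one may alternatively use \eqref{eq:monodromy_equivariant_reduction} with \(p_1=S(p_0)\), from part (a) of Theorem~\ref{teo:main1}, together with \(Y=SXS\). This gives the same cancellation, but the direct computation above already suffices.

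For the factorization, polynomial division gives \(p_M(\mu)=(\mu-1)(\mu^2+\beta\mu+\gamma)\). Expanding and matching coefficients:
\begin{itemize}
\item the \(\mu^2\) coefficient gives \(\beta-1=-\operatorname{tr}(M)\), so \(\beta=-(\operatorname{tr}(M)-1)\);
\item the constant term gives \(-\gamma=-\det(M)\), so \(\gamma=\det(M)\).
\end{itemize}
The remaining \(\mu\)-coefficient condition, \(\gamma-\beta=\sigma_2(M)\), is not an extra assumption: it is equivalent to \(p_M(1)=0\), i.e. \(1-\operatorname{tr}(M)+\sigma_2(M)-\det(M)=0\). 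This yields exactly
\[
q(\mu)=\mu^2-(\operatorname{tr}(M)-1)\mu+\det(M).
\]
Equivalently, the other two roots \(\mu_2,\mu_3\) satisfy \(\mu_2+\mu_3=\operatorname{tr}(M)-1\) and \(\mu_2\mu_3=\det(M)\).

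The only delicate point is the invariance \(MZ(p_0)=Z(p_0)\). One must be careful about which side's field, \(X\) or \(Y\), is the eigenvector, and about the ordering of the saltation factors in \eqref{eq:monodromy_saltation}. The check above with \(X(p_0)\) resolves both, using only the explicit saltation formulas and the fact that \(e_3^T\) extracts \(Xf\) and \(Yf\).
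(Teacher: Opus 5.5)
Your proposal is correct and follows essentially the same route as the paper: first show that \(\mu=1\) is an eigenvalue of \(M\) because the flow direction is preserved, then identify the quadratic cofactor from \(\operatorname{tr}(M)=1+\mu_2+\mu_3\) and \(\det(M)=\mu_2\mu_3\), which is equivalent to your coefficient matching. Your explicit check that \(M X(p_0)=X(p_0)\) through the two saltation factors is more careful than the paper's proof, which only asserts the invariance of the tangent direction as a consequence of autonomy.
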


\begin{proof}
Because the system is autonomous, the tangent direction to the periodic orbit is
preserved after one period, hence \(M\) has the trivial Floquet multiplier
\(\mu_1=1\). If \(\mu_2\) and \(\mu_3\) are the nontrivial multipliers, then the
characteristic polynomial of \(M\) is
\[
p_M(\mu)=\det(\mu I-M)=(\mu-1)(\mu-\mu_2)(\mu-\mu_3)
=(\mu-1)q(\mu),
\]
with
\[
q(\mu)=\mu^2-(\mu_2+\mu_3)\mu+\mu_2\mu_3.
\]
Using
\[
\operatorname{tr}(M)=1+\mu_2+\mu_3,
\qquad
\det(M)=\mu_1\mu_2\mu_3=\mu_2\mu_3,
\]
we obtain
\[
\mu_2+\mu_3=\operatorname{tr}(M)-1,
\qquad
\mu_2\mu_3=\det(M).
\]
Substituting these identities into the expression of \(q\) gives
\[
q(\mu)=\mu^2-(\operatorname{tr}(M)-1)\mu+\det(M).
\]
\end{proof}

Since one Floquet multiplier of an autonomous periodic orbit is always equal to \(1\),
orbital stability is determined by the two nontrivial, or transverse, multipliers.
Therefore, to guarantee that the cycle is attracting, we seek conditions ensuring that
both of them lie in the unit disk. To this end, we apply the Schur--Cohn criterion to
the quadratic polynomial associated with the transverse dynamics.
More precisely, for a monic quadratic polynomial
\[
r(\mu)=\mu^2+a_1\mu+a_0,
\]
with real coefficients, all roots satisfy \(|\mu|<1\) if and only if
\[
1+a_1+a_0>0,
\qquad
1-a_1+a_0>0,
\qquad
1-a_0>0.
\]
In our setting,
\[
a_1=-(\operatorname{tr}(M)-1),
\qquad
a_0=\det(M),
\]
and therefore the Schur--Cohn criterion yields the inequalities stated below.

\begin{proposition}[Schur--Cohn conditions for transverse multipliers]
\label{prop:schur_conditions}
Consider a crossing periodic orbit of a vector field \(Z\in\CLequi\), and let \(M\)
be its saltation-corrected monodromy matrix. Let \(q\) be the transverse quadratic
polynomial from Lemma~\ref{lem:transverse_quadratic}. Then \(|\mu_2|<1\) and
\(|\mu_3|<1\) are guaranteed by the Schur--Cohn conditions
\[
1-\det(M)>0,
\qquad
2-\operatorname{tr}(M)+\det(M)>0,
\qquad
\operatorname{tr}(M)+\det(M)>0.
\]
\end{proposition}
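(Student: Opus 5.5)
The argument is a direct substitution of the coefficients of \(q\) into the Schur--Cohn criterion for real monic quadratics recalled just above. By Lemma~\ref{lem:transverse_quadratic}, the nontrivial multipliers \(\mu_2,\mu_3\) are exactly the roots of
\[
q(\mu)=\mu^2+a_1\mu+a_0,\qquad a_1=-(\operatorname{tr}(M)-1),\qquad a_0=\det(M).
\]
Both coefficients are real because \(M\) is a product of real matrices: the saltation matrices and \(\Phi_X(T/2)\), conjugated by \([S]\) as in \eqref{eq:monodromy_equivariant_reduction}. So it suffices to check that the three criterion inequalities, written in these coefficients, are the three stated conditions:
\[
1+a_1+a_0=2-\operatorname{tr}(M)+\det(M),\qquad
1-a_1+a_0=\operatorname{tr}(M)+\det(M),\qquad
1-a_0=1-\det(M).
\]
These are exactly the hypotheses of the proposition.

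For completeness, I will also justify the criterion itself. Two cases arise.

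\emph{Complex roots.} If \(a_1^2<4a_0\), the roots are conjugate with \(|\mu_2|^2=|\mu_3|^2=a_0\). Hence \(|\mu_i|<1\) if and only if \(a_0<1\). The other two inequalities hold automatically in this case, because \(a_0>a_1^2/4\) gives
\[
1\pm a_1+a_0>(1\pm a_1/2)^2\ge0 .
\]

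\emph{Real roots.} If the roots are real, I use
\[
q(1)=1+a_1+a_0>0,\qquad q(-1)=1-a_1+a_0>0,
\]
together with \(|a_0|=|\mu_2\mu_3|<1\). These show that \(1\) and \(-1\) lie on the same side of both roots. Since the product of the roots has modulus less than one, both roots cannot lie outside \([-1,1]\) on the same side. Therefore both roots lie in \((-1,1)\).

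Only sufficiency is needed, since the statement says the multipliers are "guaranteed by" the conditions. The one point needing care is the real-root case, where the sign information \(q(\pm1)>0\) must be combined with the bound on the product of the roots. This is elementary, and no genuine obstacle is expected.
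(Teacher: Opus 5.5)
Your proposal is correct and follows the paper's route. The paper's proof likewise reads off \(a_1=-(\operatorname{tr}(M)-1)\) and \(a_0=\det(M)\) from Lemma~\ref{lem:transverse_quadratic} and substitutes them into the Schur--Cohn criterion for real monic quadratics, which it cites without proof. Your self-contained check of sufficiency is also sound: in the real-root case only \(a_0<1\) is actually needed, since both roots lying beyond \(\pm1\) on the same side would force \(\mu_2\mu_3>1\). The stronger bound \(|a_0|<1\) follows anyway from \(q(1)+q(-1)=2(1+a_0)>0\).
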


\begin{proof}
The factorization \(p_M(\mu)=(\mu-1)q(\mu)\) and the identities for \(q\)
follow from Lemma~\ref{lem:transverse_quadratic}. The three inequalities are
exactly the Schur criterion for a monic quadratic polynomial, applied to \(q(\mu)\).
\end{proof}

We now analyze the asymptotic behavior of the matrix \(M\) along the branch
\(\Gamma\) at infinity, which is precisely the regime where the existence of the
symmetric cycle was proved. This asymptotic analysis constitutes the first
step toward the spectral stability of the cycle.

Corollary~\ref{cor:detM_geometry_A_minus_2C} expresses \(\det(M)\) in terms of the
crossing coordinates in the case \(A=-2C\). In particular, our first objective is
to prove that \(\det(M)<1\) for \(y_0\) sufficiently large, since this is one of the
conditions needed to ensure that the transverse polynomial satisfies the
Schur--Cohn conditions stated in Proposition~\ref{prop:schur_conditions}.

\begin{proposition}\label{prop:m-limit-hyperbola}
Assume that \(Z\in\CLequi\), that \(A=-2C\), and that \(\Gamma_1\) is a hyperbola.
Let \(\Gamma\) be the branch of \(\Gamma_1\) used to parametrize the initial condition
\((x_0,y_0,0)\) of the symmetric cycle, and write \(x_0=x_0(y_0)\). If we define

\begin{equation}\label{eq:m_A_-2C}
m_{\Gamma_{1}}:= \frac{2H}{H+\sqrt{D(\Gamma_1)}+1},
\end{equation}

then
\[
\lim_{y_0\to\infty}\det(M)
=
\left(m_{\Gamma_{1}}\right)^2.
\]
Furthermore, 
\[
\left(m_{\Gamma_{1}}\right)^2
<1.
\]
Consequently, there exists \(\nu>0\) such that \(\det(M)<1\) for every \(y_0>\nu\). Moreover,
\[
\det(M)
=
\left(m_{\Gamma_{1}}\right)^2+o(1)
\qquad\text{as } y_0\to\infty.
\]
\end{proposition}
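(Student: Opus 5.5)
The plan is to combine Corollary~\ref{cor:detM_geometry_A_minus_2C}, which gives \(\det(M)=(y_0/x_0)^2\), with the explicit parametrization \(x_0=x_0(y_0)\) of the branch \(\Gamma\) of \(\Gamma_1\) used in Lemma~\ref{lema:half_return_delta_Y}. Everything then reduces to computing the asymptotic slope \(\lim_{y_0\to\infty} y_0/x_0(y_0)\) and comparing its square with \(1\).

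First, I will write the branch as
\[
x_0(y_0)=\frac{(H+1)y_0}{2H}+\frac{\Lambda C}{C^2+1}+\sqrt{\frac{D(\Gamma_1)}{4H^2}\,y_0^2+\beta y_0+\kappa},
\]
where \(\beta\) and \(\kappa\) are constants in \(C,H,\Lambda\) read off from the formula in Lemma~\ref{lema:half_return_delta_Y}. Dividing by \(y_0\) and letting \(y_0\to\infty\), using \(0<H<1\) so that \(\sqrt{D(\Gamma_1)/(4H^2)}=\sqrt{D(\Gamma_1)}/(2H)\), I get
\[
\frac{x_0}{y_0}\to\frac{H+1+\sqrt{D(\Gamma_1)}}{2H}.
\]
The lower-order terms contribute \(O(1/y_0)\) after expanding the square root as \(\frac{\sqrt{D}}{2H}y_0\sqrt{1+O(1/y_0)}\). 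Inverting gives \(y_0/x_0\to m_{\Gamma_1}\), and squaring gives \(\lim\det(M)=m_{\Gamma_1}^2\). The expansion shows in fact that \(\det(M)=m_{\Gamma_1}^2+O(1/y_0)\), which is stronger than the stated \(o(1)\).

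Second, to show \(m_{\Gamma_1}^2<1\), I note that for \(0<H<1\) the denominator \(H+1+\sqrt{D(\Gamma_1)}\) is positive. Hence \(0<m_{\Gamma_1}\), and \(m_{\Gamma_1}<1\) is equivalent to \(2H<H+1+\sqrt{D}\), that is, \(H-1<\sqrt{D}\). This holds trivially since \(H<1\) and \(\sqrt{D}\ge0\).

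Third, the existence of \(\nu\) follows directly from the limit: take \(\varepsilon=(1-m_{\Gamma_1}^2)/2\) in the definition of the limit.

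The main obstacle is sign bookkeeping. I need the branch to be the one with \(x_0>0\) and \(x_0\sim\) a positive multiple of \(y_0\), so that no sign flip occurs and the denominator never vanishes. I also need the discriminant under the square root to be written consistently, since the paper's \(D(\Gamma_1)\) is \((1-H)(3H+1)\). If the paper intends \(-1/3<H\le0\) as well, the sign of \(H\) in \(\sqrt{H^2}\) would need separate handling. I will restrict to \(0<H<1\), as Lemma~\ref{lema:half_return_delta_Y} does, which covers \(\overline H\) for all \(C\neq0\).
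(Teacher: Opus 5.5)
Your argument is correct and is essentially the paper's. You use $\det(M)=(y_0/x_0)^2$ from Corollary~\ref{cor:detM_geometry_A_minus_2C}, the asymptotic slope $y_0/x_0\to m_{\Gamma_1}$ of the branch (which you derive from the explicit parametrization with an $O(1/y_0)$ remainder, whereas the paper only asserts it), and the inequality $2H<H+1+\sqrt{D(\Gamma_1)}$. The paper also tries to cover $-1/3<H\le 0$ by bounding $|m_{\Gamma_1}|$, but your restriction to $0<H<1$ is the right one: the branch of Lemma~\ref{lema:half_return_delta_Y} is only defined there, and for $H<0$ the displayed $+$-square-root branch gives $y_0/x_0\to 1/m_{\Gamma_1}$ with $x_0y_0<0$, so those points are not even crossing points.
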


\begin{proof}
Since \(\Gamma_1\) is a hyperbola, its discriminant satisfies \(D(\Gamma_1)>0\), and by the
classification of \(\Gamma_1\) one has \(-\frac13<H<1\). Therefore, by the asymptotic geometry of
the chosen branch \(\Gamma\) at infinity,
\[
m_{\Gamma_{1}}
=
\frac{2H}{H+\sqrt{D(\Gamma_1)}+1}=\lim_{y_0\to\infty}\frac{y_0}{x_0(y_0)}.
\]

Using \eqref{eq:detM_y0x0}, we obtain
\[
\lim_{y_0\to\infty}\det(M)
=
\left(\lim_{y_0\to\infty}\frac{y_0}{x_0(y_0)}\right)^2
=
\left(m_{\Gamma_{1}}\right)^2.
\]
Moreover,
\[
H+\sqrt{D(\Gamma_1)}+1>H+1>\frac23>0,
\]
so the denominator is positive. We now prove that \(|m_{\Gamma_{1}}|<1\).

If \(0<H<1\), then
\[
2H<H+1<H+\sqrt{D(\Gamma_1)}+1,
\]
and hence \(0<m<1\). If \(H=0\), then \(m_{\Gamma_{1}}=0\). Finally, if \(-\frac13<H<0\), then
\[
|m_{\Gamma_{1}}|=\frac{-2H}{H+\sqrt{D(\Gamma_1)}+1},
\]
and it is enough to show that
\[
-2H<H+\sqrt{D(\Gamma_1)}+1,
\]
or equivalently,
\[
-3H<\sqrt{D(\Gamma_1)}+1.
\]
Since \(H>-\frac13\), we have \(-3H<1\), and since \(D(\Gamma_1)>0\), we also have
\(1<1+\sqrt{D(\Gamma_1)}\). Thus \(-3H<1+\sqrt{D(\Gamma_1)}\), proving \(|m_{\Gamma_{1}}|<1\). Consequently,
\(\left(m_{\Gamma_{1}}\right)^2<1\).

Finally, since \(\lim_{y_0\to\infty}\det(M)=\left(m_{\Gamma_{1}}\right)^2<1\), there exists \(\nu>0\) such that
\(\det(M)<1\) for every \(y_0>\nu\). The asymptotic expansion
\[
\det(M)
=
\left(m_{\Gamma_{1}}\right)^2+o(1)
\qquad\text{as } y_0\to\infty
\]
is just another way of writing the same limit.
\end{proof}

We now consider the analogous asymptotic regime for \(A=2C\). In this case, the relevant conic is
\(\Gamma_2\), whose discriminant is
\[
D(\Gamma_2)=H^2+2H-3.
\]

Having established the asymptotic behavior of the transverse determinant for $A = -2C$, we now turn our attention to the trace of the monodromy matrix. As dictated by the Schur--Cohn conditions (Proposition~\ref{prop:schur_conditions}), the trace is the remaining invariant required to fully determine the stability of the transverse multipliers. 

\begin{proposition}\label{prop:trace-limit-hyperbola}
Assume that $Z\in\CLequi$, that $A=-2C$, and that $0<H<1$, so that $\Gamma_1$ is a hyperbola. Let $\Gamma$ be the branch of $\Gamma_1$ used to parametrize the initial condition $(x_0, y_0, 0)$ of the symmetric limit cycle. If we define
\begin{equation}\label{eq:trace-asymptotic}
\tau_{\Gamma_{1}} := \left(m_{\Gamma_{1}}\right)^2 \left( \Psi_2 e^{2\pi C} + \Psi_{-1} e^{-\pi C} + \Psi_{-4} e^{-4\pi C} \right),
\end{equation}
where $m_{\Gamma_{1}}$ is given by \eqref{eq:m_A_-2C} and the algebraic coefficients $\Psi_k(H)$ are defined as:
\begin{align*}
\Psi_2 &= -\frac{\left(H +1\right) \left(H -\sqrt{D\! \left(\Gamma_{1}\right)}-3\right)}{2}, \\
\Psi_{-1} &= \frac{\left(H^{2}-1\right) \left(\left(H +1\right) \sqrt{D\! \left(\Gamma_{1}\right)}-\left(H -1\right)^{2}+2\right)}{H^{2}}, \\
\Psi_{-4} &= \frac{\left(H +1\right) \sqrt{D\! \left(\Gamma_{1}\right)}-\left(H -1\right)^{2}+2}{2},
\end{align*}
then, the asymptotic trace of the monodromy matrix $M$ as $y_0 \to \infty$ is given by
\begin{equation}\label{eq:trace-asymptotic-limit}
 \lim_{y_0 \to \infty} \text{tr}(M) = \tau_{\Gamma_{1}}.
\end{equation}

\end{proposition}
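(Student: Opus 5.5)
The plan is to compute $\operatorname{tr}(M)$ explicitly from the reduced form \eqref{eq:monodromy_equivariant_reduction} and then pass to the limit along the branch $\Gamma$. Two facts drive the limit:

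\begin{itemize}
\item Lemma~\ref{lema:half_return_delta_X} gives $t_X=T/2=\pi+\tau^X(v_0)$ with $\tau^X(v_0)\to 0$ as $v_0=1/y_0\to 0$.
\item Along $\Gamma$, the ratio $y_0/x_0(y_0)$ tends to $m_{\Gamma_1}$, as in the proof of Proposition~\ref{prop:m-limit-hyperbola}.
\end{itemize}

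So $\Phi_X(T/2)\to e^{DX\,\pi}$. Under $A=-2C$ this matrix is explicit. Its eigenvalues are $e^{-2\pi C}$ on the real direction and $-e^{\pi C}$ on the focal plane $W^X$, since $e^{(C\pm i)\pi}=-e^{\pi C}$. Hence $e^{DX\pi}$ is a combination of the projector onto the real eigendirection and the projector onto $W^X$, with no rotational part. I will write it as $e^{DX\pi}=e^{-2\pi C}P_r-e^{\pi C}P_W$, computing both projectors from the canonical form \eqref{eq:canonica_ortogonal} together with the invariant plane $f_2=x-H(Az+y)=0$ of Proposition~\ref{prop:firstintegral_X}.

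Next I will handle the saltation matrices. Each one has the form $I+u\,e_3^T$. For $S_{X\to Y}(p_1)$ the vector is $u=(Y-X)(p_1)/X_3(p_1)$, and for $S_{Y\to X}(p_0)$ it is $u=-(Y-X)(p_0)/Y_3(p_0)$. With $X_3(p_1)=-x_0$ and $Y_3(p_0)=x_0$, the entries of $(Y-X)$ grow linearly in $x_0,y_0$. The first two components of $u$ are therefore bounded ratios, which converge to limits expressed through $m_{\Gamma_1}$, $H$ and $C$. For instance, $X_1-Y_1$ at $p_0$ is $-2Cx_0+H\Lambda+\Lambda$, so the contribution divided by $x_0$ tends to $-2C$. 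I will then expand
\[
\operatorname{tr}(M)=\operatorname{tr}\bigl((I+u_0e_3^T)[S]E[S](I+u_1e_3^T)E\bigr),\qquad E=\Phi_X(T/2),
\]
into the four terms $\operatorname{tr}([S]E[S]E)$, $e_3^T[S]E[S]E\,u_0$, $e_3^TE\,u_1$ and $(e_3^T[S]E[S]u_1)(e_3^TE\,u_0)$. Taking limits, the exponentials that appear are products of pairs from $\{e^{-2\pi C},-e^{\pi C}\}$, namely $e^{-4\pi C}$, $e^{-\pi C}$ and $e^{2\pi C}$. These match the three $\Psi_k$ terms in \eqref{eq:trace-asymptotic}.

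The last step is algebraic simplification. I will substitute $m_{\Gamma_1}=2H/(H+\sqrt{D(\Gamma_1)}+1)$ and use $D(\Gamma_1)=(1-H)(3H+1)$ to rationalize expressions such as $1/m_{\Gamma_1}$. The aim is to reduce each coefficient to the form $m_{\Gamma_1}^2\Psi_k$. The overall factor $m_{\Gamma_1}^2$ should come out naturally, because the $u$'s carry $1/x_0$ while $\operatorname{tr}$-relevant entries scale with $y_0$. Here $0<H<1$ ensures $H\neq0$, which is needed for the $1/H^2$ in $\Psi_{-1}$, and it also puts us on the branch of Lemma~\ref{lema:half_return_delta_Y}.

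\textbf{Main obstacle.} I expect this last bookkeeping step to be the hardest: matching the rational-in-$\sqrt{D(\Gamma_1)}$ limits to the stated $\Psi_k$ exactly. I would verify it term by term with computer algebra, checking one exponential coefficient at a time. Separately, I must justify that the $O(\tau^X)$ corrections in $E$ do not survive after multiplication by the bounded $u$'s. This holds because every factor stays bounded and $\tau^X(v_0)\to0$ analytically.
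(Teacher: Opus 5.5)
Your route is the paper's own: evaluate $\operatorname{tr}(M)$ in the simultaneous limit $v_0\to 0$, $t\to\pi$ along $\Gamma$ and factor out $m_{\Gamma_1}^2$. Your splitting $e^{DX\pi}=e^{-2\pi C}P_r-e^{\pi C}P_W$ and the rank-one expansion of the saltation factors just make explicit the computation the paper leaves implicit.

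There is one slip to fix when you carry it out: the third trace term is $e_3^T E\,[S]E[S]\,u_1$, not $e_3^T E\,u_1$. Since the last rows of $E$ and $[S]E[S]$ both tend to $-e^{\pi C}e_3^T$, only the third components $m_{\Gamma_1}-1$ of $u_0,u_1$ survive. The limit is then $H^2e^{-4\pi C}+2(H^2-1)e^{-\pi C}+(H^2+m_{\Gamma_1}^2)e^{2\pi C}$. The prefactor $m_{\Gamma_1}^2$ is therefore a normalization (using $H^2/m_{\Gamma_1}^2=\Psi_{-4}$), not something forced by the $1/x_0$ scaling of the $u$'s.
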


\begin{proof}
By parametrizing the initial condition along the branch $\Gamma$ 
\[
x = \dfrac{(H + 1) y}{2H} + \dfrac{\Lambda C}{C^{2}+1} + \sqrt{\dfrac{ \Lambda \left((C^{2}+1)(\Lambda + C(1 - H)y) - \Lambda H \right)}{ H (C^{2}+1)^2} + \dfrac{D(\Gamma_1)}{4H^2}},
\] 
with respect to the regularizing parameter $y= 1/v_0$, we take the simultaneous limit as $v_0 \to 0$ and the time of flight $t \to \pi$. The evaluation of the trace of the transition matrices yields the explicit rational expression. Factoring out $\left(m_{\Gamma_{1}}\right)^2$ we obtain \eqref{eq:trace-asymptotic-limit}.
\end{proof}

To complete the proof of item~(c) of Theorem~\ref{teo:main1}, we now pass from the asymptotic formulas for the monodromy invariants to a parameter description of the stable regime. The key point is that the Schur--Cohn inequalities admit a well-defined limit along the large-amplitude symmetric branch, and this limit selects an open subset of the \((C,H)\)-plane where the corresponding crossing symmetric cycle is orbitally asymptotically stable.

\begin{theorem}[Asymptotic description of the stability region in the \((C,H)\)-plane]\label{teo:stability-band}
Assume the hypotheses of Proposition~\ref{prop:trace-limit-hyperbola}, \(C>0\), and the continuation setting of item~(b) of Theorem~\ref{teo:main1}. Then the asymptotic Schur--Cohn conditions for the two transverse Floquet multipliers reduce to
\begin{equation}\label{eq:stability-band}
\begin{cases}
2 + \left(m_{\Gamma_{1}}\right)^2 - \tau_{\Gamma_{1}} > 0,\\
\tau_{\Gamma_{1}} + \left(m_{\Gamma_{1}}\right)^2 > 0.
\end{cases}
\end{equation}
These inequalities define an open subset of the \((C,H)\)-plane, denoted by
\[
\widetilde{\mathcal B}_{\mathrm{st}},
\]
which we call the asymptotic stability band; see Figure~\ref{fig:stability-region}.

For every \((C,H)\in\widetilde{\mathcal B}_{\mathrm{st}}\), the large-amplitude tail of the corresponding symmetric branch is asymptotically stable. Consequently, there exists an open subset
\[
\mathcal B_{\mathrm{st}}\subset \mathbb R^2_{(C,H)},
\]
called the stability region, containing this asymptotically stable tail, such that the corresponding symmetric limit cycle is asymptotically stable for every \((C,H)\in\mathcal B_{\mathrm{st}}\).

Moreover, the boundary of \(\widetilde{\mathcal B}_{\mathrm{st}}\) corresponds asymptotically to two codimension-one losses of stability:
\begin{enumerate}
    \item[(i)] the upper boundary
    \[
    2 + \left(m_{\Gamma_{1}}\right)^2 - \tau_{\Gamma_{1}} = 0
    \]
    coincides with
    \[
    H_{\mathrm{crit}}=\frac{1}{2\cosh(\pi C)-1},
    \]
    where a transverse multiplier approaches \(1\);
    \item[(ii)] the lower boundary
    \[
    \tau_{\Gamma_{1}} + \left(m_{\Gamma_{1}}\right)^2 = 0
    \]
    corresponds to a transverse multiplier approaching \(-1\).
\end{enumerate}
\end{theorem}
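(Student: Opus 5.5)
The proof will pass the Schur--Cohn inequalities of Proposition~\ref{prop:schur_conditions} to the limit along the large-amplitude branch, and then use continuity and openness. First I substitute the asymptotic invariants into the three inequalities. By Proposition~\ref{prop:m-limit-hyperbola}, $\det(M)\to (m_{\Gamma_1})^2$, and by Proposition~\ref{prop:trace-limit-hyperbola}, $\operatorname{tr}(M)\to\tau_{\Gamma_1}$, as $y_0\to\infty$ (equivalently $v_0\to 0^+$). The inequality $1-\det(M)>0$ holds automatically in the limit, since Proposition~\ref{prop:m-limit-hyperbola} gives $(m_{\Gamma_1})^2<1$ for every $-\tfrac13<H<1$. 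The other two inequalities become exactly the system \eqref{eq:stability-band}. Both left-hand sides are continuous, indeed real-analytic, in $(C,H)$ on $\{C>0,\ 0<H<1\}$, because $D(\Gamma_1)=(1-H)(3H+1)>0$ there. Strict inequalities therefore define an open set $\widetilde{\mathcal B}_{\mathrm{st}}$.

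Next I pass from the asymptotic band to actual cycles. By item~(b) and the Weierstrass factorization $\tau(v,H)=(\delta_1(H)+v)\overline\tau(v,H)$, the cycle corresponds to the zero $v_0=-\delta_1(H)$, which tends to $0$ as $H\to\overline H$. So the crossing point $y_0=1/v_0$ of the cycle tends to infinity along the continuation. The entries of $M$ come from \eqref{eq:monodromy_equivariant_reduction}. They are analytic in $(v_0,C,H)$ near $v_0=0$, because $t_X=\pi+\tau^X(v_0)$ is analytic (Lemma~\ref{lema:half_return_delta_X}) and because the saltation factors involve only the ratio $x_0/y_0$ and $\Lambda v_0$, which are analytic in $v_0$ on the chosen branch. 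Hence $\det(M)$ and $\operatorname{tr}(M)$ differ from their limits by $O(v_0)$, uniformly on compact subsets of parameter space. For $(C,H)\in\widetilde{\mathcal B}_{\mathrm{st}}$ the three strict Schur--Cohn inequalities therefore hold for all sufficiently small $v_0>0$, so the transverse multipliers lie in the open unit disk. By the standard Floquet argument for the Poincar\'e map built from \eqref{eq:monodromy_saltation}, the cycle is orbitally asymptotically stable. I then define $\mathcal B_{\mathrm{st}}$ as the set of parameters near this tail for which the continued cycle exists and the strict inequalities hold. This set is open by continuity of $M$ in the parameters, which also gives the persistence claim of item~(c).

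For the boundary statements I will factor the two limiting expressions. The lower boundary in (ii) is immediate: $q(-1)=1+\operatorname{tr}(M)-1+\det(M)=\operatorname{tr}(M)+\det(M)$, so its vanishing means $\mu=-1$ is a root of $q$. For (i), $q(1)=2-\operatorname{tr}(M)+\det(M)$, so its vanishing means a multiplier equals $1$. I then need to show that this zero set is $H=1/(2\cosh(\pi C)-1)$. The plan is to substitute $\overline H$, express $\sqrt{D(\Gamma_1)}$ and the $\Psi_k$ in terms of $x=e^{\pi C}$, and verify that $2+m^2-\tau_{\Gamma_1}$ vanishes identically. Conceptually, a unit multiplier is forced at $\overline H$ because there $\gamma_1=0$ and the cycle is born from infinity, i.e.\ it is a saddle-node-type degenerate point of the return map.

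The main obstacle is this last identity. It requires simplifying nested radicals: $\sqrt{D(\Gamma_1)}$ at $\overline H$ equals $2\sqrt{x^2-x+1}\,\sqrt{x^2+x+1}/(x^2-x+1)$ up to normalization, and products of these must combine with the exponentials $e^{2\pi C},e^{-\pi C},e^{-4\pi C}$. I would first try a computer-algebra verification after rationalizing. If that fails, I would derive the identity structurally: the derivative of the reduced return map along $\Gamma$ at infinity is $1+c\,\gamma_1$ with $c\neq 0$, and $\gamma_1(\overline H)=0$. Showing that $q(1)$ is proportional to $\gamma_1$ would then avoid the radical algebra entirely.
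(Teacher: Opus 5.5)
Your outline is the paper's route: limits of \(\det M\) and \(\operatorname{tr}M\), openness, continuation, and boundaries read off from \(q(\pm1)\). The step from \(\widetilde{\mathcal B}_{\mathrm{st}}\) to stability of actual cycles fails, however, and the paper's proof asserts the same passage without closing it.

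\textbf{Where the argument breaks.} You fix \((C,H)\in\widetilde{\mathcal B}_{\mathrm{st}}\) and let \(v_0\to0^+\), but \(v_0\) is not a free variable. The cycle at \((C,H)\) sits at \(v_0^*=-\delta_1(H)\), which is small only for \(H\) close to \(\overline H(C)\). For \(H\) at a fixed distance from \(\overline H\) one has \(\gamma_1(H)\neq0\), so the displacement function has no zero near \(v_0=0\).

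By (i), which is correct, \(H=\overline H(C)\) is exactly the boundary of \(\widetilde{\mathcal B}_{\mathrm{st}}\) on which the limit of \(q(1)=2-\operatorname{tr}M+\det M\) vanishes. There \(m_{\Gamma_1}=e^{-\pi C}\) and \(\tau_{\Gamma_1}=2+e^{-2\pi C}\), so the limiting transverse multipliers are exactly \(1\) and \(e^{-2\pi C}\). This is your own remark that a unit multiplier is forced because the cycle is born at infinity, and it is what breaks the preceding step.

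Consequently, no compact subset of \(\widetilde{\mathcal B}_{\mathrm{st}}\) contains a cycle with small \(v_0\). Along the cycles that do have small \(v_0^*\), the margin in \(q(1)>0\) is \(O(|H-\overline H|)=O(v_0^*)\), the same order as your error term. So ``strict inequality plus an \(O(v_0)\) error, uniformly on compacts'' proves stability of no actual cycle.

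There is a further obstruction. \(m_{\Gamma_1}\) and \(\tau_{\Gamma_1}\) do not involve \(\Lambda\). The side of \(\overline H\) on which large cycles exist is fixed by the sign of \(-\gamma_1/\gamma_2\), which flips with \(\operatorname{sign}\Lambda\). So the zeroth-order data cannot even place the tail inside the band.

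\textbf{What is missing.} You need a first-order argument along the branch showing that the multiplier tending to \(1\) stays below \(1\). One way to do it:
\begin{itemize}
\item Since the \(Y\)-flow is the \(S\)-image of the \(X\)-flow, the return map is \(R\circ R\) with \(R=S\circ\Delta^X\). Symmetric cycles are fixed points of \(R\), and \(\mu_{2,3}=\rho_{1,2}^2\), where the eigenvalues \(\rho_{1,2}\) of \(DR\) tend to \(1\) and \(e^{-\pi C}\).
\item In the canonical form, \(y_1\) depends only on \(y_0\). Since \(f_2\) has cofactor \(-2C\), one has \(x_1-Hy_1=E\,(x_0-Hy_0)\) with \(E=e^{-2Ct^X(y_0)}\).
\item The fixed-point problem becomes \(G(y_0)=y_0(1-HE)-y_1(y_0)(E-H)=0\), and one checks \(1-\operatorname{tr}DR+\det DR=G'(y_0^*)\).
\item Write \(G=y_0\widehat G(v_0,H)\) with \(\widehat G=g_0(H)+g_1(H)v_0+\cdots\). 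At the zero, \(G'(y_0^*)=-v_0^*\partial_{v_0}\widehat G=g_0(H)+O((v_0^*)^2)\), where \(g_0(H)=1+e^{-\pi C}-H(e^{\pi C}+e^{-2\pi C})\) is positive exactly for \(H<\overline H\).
\item For \(\Lambda>0\) (the visible--visible case) the cycle exists for \(H\) slightly below \(\overline H\). This gives \(\rho_1<1\) and hence stability of the tail.
\end{itemize}

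Away from \(\overline H\), in particular near the lower boundary (ii), \(v_0^*\) is not small. So (ii), and the claim ``for every \((C,H)\in\widetilde{\mathcal B}_{\mathrm{st}}\)'', concern only the formal limit expressions, not actual cycles.

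\textbf{On the radicals in (i).} The nested radicals you worried about do not arise. At \(\overline H\) the roots of \(Hr^2-(H+1)r+H\) are \(e^{\pm\pi C}\), so \(D(\Gamma_1)\) is a perfect square. Moreover, the \(\Psi_k\)-expression for \(\tau_{\Gamma_1}\) equals \(H^2(e^{\pi C}+e^{-2\pi C})^2-2e^{-\pi C}+m_{\Gamma_1}^2e^{2\pi C}\). Using \(\overline H(e^{\pi C}+e^{-2\pi C})=1+e^{-\pi C}\), this collapses to \(2+e^{-2\pi C}=2+m_{\Gamma_1}^2\).
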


\begin{figure}[h!]
	\centering
	\includegraphics[width=0.45\textwidth]{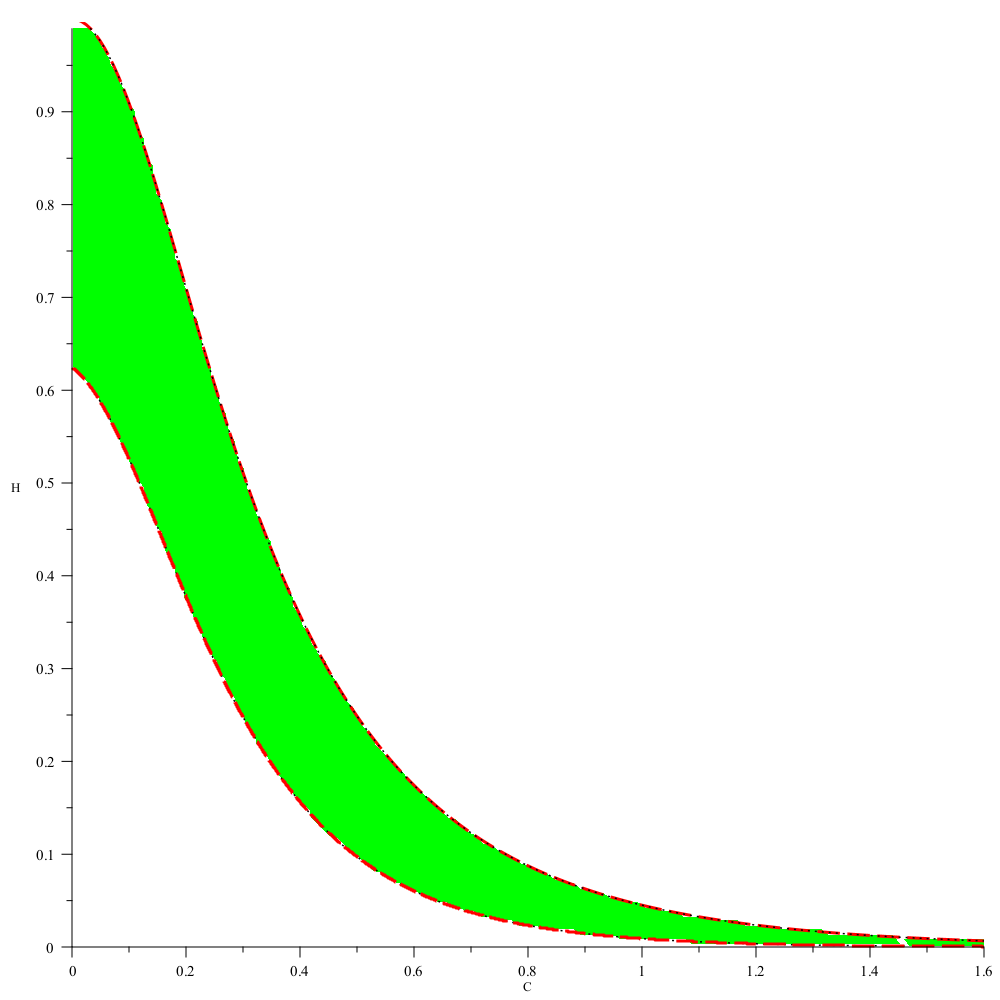}
	\caption{Stability band in the \((C,H)\)-plane for \(A=-2C\). The green region corresponds to the open set \(\mathcal B_{\mathrm{st}}\) where the two nontrivial transverse Floquet multipliers lie inside the unit disk. The dashed boundary curves are defined asymptotically by
		\(
		\tau_{\Gamma_{1}} + \left(m_{\Gamma_{1}}\right)^2 = 0
		\qquad\text{and}\qquad
		\tau_{\Gamma_{1}} = 2 + \left(m_{\Gamma_{1}}\right)^2.
		\)}
	\label{fig:stability-region}
\end{figure}

\begin{proof}
By Proposition~\ref{prop:m-limit-hyperbola},
\[
\det(M)\to \left(m_{\Gamma_{1}}\right)^2<1
\qquad\text{as } y_0\to\infty.
\]
Therefore, along the large-amplitude tail of the symmetric branch, the first Schur--Cohn inequality
\[
1-\det(M)>0
\]
holds for sufficiently large \(y_0\). This identifies an asymptotically stable tail, but it does not by itself describe the whole stability region, since stable symmetric cycles may also occur at finite and small amplitudes.

The remaining two Schur--Cohn inequalities are
\[
2-\operatorname{tr}(M)+\det(M)>0
\qquad\text{and}\qquad
\operatorname{tr}(M)+\det(M)>0.
\]
Using Propositions~\ref{prop:m-limit-hyperbola} and \ref{prop:trace-limit-hyperbola}, these converge along the large-amplitude tail to
\[
2 + \left(m_{\Gamma_{1}}\right)^2 - \tau_{\Gamma_{1}} >0
\qquad\text{and}\qquad
\tau_{\Gamma_{1}} + \left(m_{\Gamma_{1}}\right)^2 >0.
\]
Hence \eqref{eq:stability-band} defines an open subset
\[
\widetilde{\mathcal B}_{\mathrm{st}}\subset \mathbb R^2_{(C,H)},
\]
and every sufficiently large-amplitude symmetric limit cycle with parameters in \(\widetilde{\mathcal B}_{\mathrm{st}}\) is asymptotically stable.

Now let \(\gamma_{C,H}\) denote the symmetric branch selected in item~(b) of Theorem~\ref{teo:main1}. By the closing relation obtained through the Weierstrass Preparation Theorem, this branch depends continuously on the parameters as long as the crossing configuration persists. Consequently, the associated saltation-corrected monodromy matrix \(M\), its trace and determinant, and the two transverse Floquet multipliers also vary continuously along the branch.

Since the open unit disk is an open subset of the complex plane, asymptotic stability is an open property along the continued branch. Therefore each asymptotically stable large-amplitude cycle belonging to the tail determined above is surrounded by an open neighborhood of parameter values for which the continued symmetric cycle remains asymptotically stable. Taking the union of all such neighborhoods, we obtain an open set
\[
\mathcal B_{\mathrm{st}},
\]
which contains the stable large-amplitude tail and in which the corresponding symmetric limit cycle is asymptotically stable.

This also explains the occurrence of finite- and small-amplitude cycles inside the stability region: they belong to the same continued symmetric branch, and their stability follows by continuation from the asymptotically stable tail. In particular, asymptotic stability persists under sufficiently small perturbations of \(C\) and \(H\) within \(\mathcal B_{\mathrm{st}}\).

Finally, on the upper asymptotic boundary
\[
2 + \left(m_{\Gamma_{1}}\right)^2 - \tau_{\Gamma_{1}} = 0,
\]
one transverse multiplier approaches \(1\), recovering the critical value
\[
H_{\mathrm{crit}}=\frac{1}{2\cosh(\pi C)-1}.
\]
On the lower asymptotic boundary
\[
\tau_{\Gamma_{1}} + \left(m_{\Gamma_{1}}\right)^2 = 0,
\]
one transverse multiplier approaches \(-1\), corresponding to an asymptotic period-doubling loss of stability. This completes the proof of item~(c).
\end{proof}

\begin{remark}
The set \(\widetilde{\mathcal B}_{\mathrm{st}}\) is obtained from the large-amplitude asymptotics of the Schur--Cohn conditions, and therefore describes the boundary of the stability region only asymptotically. The open set \(\mathcal B_{\mathrm{st}}\) is obtained by continuation of the stable symmetric branch and may contain finite- and small-amplitude cycles, as confirmed by the numerical catalog.
\end{remark}

\begin{corollary}[Perturbative inclusion in the stability band]
	\label{cor:perturbative_band}
	Assume that the stability band can be written as
	\[
	\mathcal B_{\mathrm{st}}
	=
	\{(C,H)\,;\; C\in I,\; H_{\min}(C)<H<H_{\mathrm{crit}}(C)\},
	\]
	where \(I\subset\mathbb R\) is the admissible range of \(C\), and define
	\[
	\Delta(C):=H_{\mathrm{crit}}(C)-H_{\min}(C)>0.
	\]
	Then, for every fixed \(C\in I\) and every perturbation
	\[
	0<\delta<\Delta(C),
	\]
	the parameter choice
	\[
	H=H_{\mathrm{crit}}(C)-\delta
	\]
	belongs to \(\mathcal B_{\mathrm{st}}\). Consequently, the Weierstrass Preparation Theorem closing relation yields a crossing symmetric limit cycle, and this cycle is orbitally asymptotically stable.
\end{corollary}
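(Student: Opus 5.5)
The corollary follows almost directly from the structure assumed for \(\mathcal B_{\mathrm{st}}\) together with item~(b) of Theorem~\ref{teo:main1} and Theorem~\ref{teo:stability-band}. The plan has three steps: membership of the perturbed parameter, existence of the cycle, and its stability.

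\emph{Membership.} Fix \(C\in I\) and \(0<\delta<\Delta(C)\), and set \(H=H_{\mathrm{crit}}(C)-\delta\). Since \(\delta>0\), we get \(H<H_{\mathrm{crit}}(C)\). Since \(\delta<\Delta(C)=H_{\mathrm{crit}}(C)-H_{\min}(C)\), we get \(H>H_{\min}(C)\). Hence \((C,H)\in\mathcal B_{\mathrm{st}}\) by the assumed description of the band. I would also record that \(H_{\mathrm{crit}}(C)=\overline H=1/(2\cosh(\pi C)-1)\in(-\tfrac13,1)\). Then, because \(H\) stays in an interval contained in the hyperbola range of Lemma~\ref{lemma:aminus2c}, \(\Gamma_1\) is still a hyperbola. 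This is needed so that Lemmas~\ref{lema:half_return_delta_X} and \ref{lema:half_return_delta_Y} apply. For these lemmas and Proposition~\ref{prop:trace-limit-hyperbola} we also need \(0<H<1\). I would either assume \(H_{\min}(C)\ge 0\), consistent with Figure~\ref{fig:stability-region} and \(C>0\), or restrict \(\delta\) accordingly.

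\emph{Existence.} Next I would invoke the Weierstrass factorization from the proof of item~(b): \(\tau(v,H)=(\delta_1(H)+v)\,\overline\tau(v,H)\), valid on a neighbourhood \(U_0\) of \((0,\overline H)\). The zero \(v=-\delta_1(H)\) gives a symmetric cycle whenever it is positive and small. So I would compute the sign of \(\delta_1\) near \(\overline H\), using \(\delta_1(H)=\gamma_1(H)/\gamma_2(\overline H)+O((H-\overline H)^2)\). This requires \(\partial_H\gamma_1(\overline H)\neq0\) and checking that \(H<\overline H\) is the side giving \(-\delta_1(H)>0\).

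This is the main obstacle. The factorization is only local, so it guarantees a cycle only for \(\delta\) small. For \(\delta\) up to \(\Delta(C)\) it must be supplemented by the continuation argument in the proof of Theorem~\ref{teo:stability-band}: the symmetric branch continues inside \(\mathcal B_{\mathrm{st}}\) as long as the crossing configuration persists. I would use that continuation statement directly, since \(\mathcal B_{\mathrm{st}}\) was constructed as the union of the parameter neighbourhoods along which the continued cycle exists and is stable.

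\emph{Stability.} Finally, Theorem~\ref{teo:stability-band} states that for every \((C,H)\in\mathcal B_{\mathrm{st}}\) the corresponding symmetric cycle is asymptotically stable. Equivalently, the transverse multipliers satisfy the Schur--Cohn conditions of Proposition~\ref{prop:schur_conditions}. Since the trivial multiplier is \(1\) (Lemma~\ref{lem:transverse_quadratic}) and the other two lie in the open unit disk, the cycle is orbitally asymptotically stable.
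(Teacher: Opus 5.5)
Your proposal is correct and follows the paper's proof. The paper does exactly your membership step, $H_{\min}(C)<H_{\mathrm{crit}}(C)-\delta<H_{\mathrm{crit}}(C)$, and then simply cites Theorem~\ref{teo:stability-band}; since $\mathcal B_{\mathrm{st}}$ is built there by continuation of the Weierstrass branch, that citation supplies both existence and asymptotic stability, so your proposed sign analysis of $\delta_1$ is unnecessary, as you yourself concluded.
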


\begin{proof}
	Fix \(C\in I\). If \(0<\delta<\Delta(C)=H_{\mathrm{crit}}(C)-H_{\min}(C)\), then
	\[
	H_{\min}(C)<H_{\mathrm{crit}}(C)-\delta<H_{\mathrm{crit}}(C),
	\]
	hence \((C,H_{\mathrm{crit}}(C)-\delta)\in\mathcal B_{\mathrm{st}}\). The conclusion follows from Theorem~\ref{teo:stability-band}.
\end{proof}

Finally, the geometry of the lower boundary can be expressed in terms of the hyperbola \(\Gamma_1\). More precisely, \(D(\Gamma_1)\) denotes the discriminant of the hyperbola associated with the quadratic form defining \(\Gamma_1\). In particular, the condition \(D(\Gamma_1)>0\) guarantees that \(\Gamma_1\) is indeed a nondegenerate hyperbola, and the asymptotic transition at the lower boundary of \(\mathcal B_{\mathrm{st}}\) is governed by this hyperbolic geometry.

\section*{Funding}

No funding was received to assist with the preparation of this manuscript.

\section*{Data availability}
Data sharing not applicable to this article as no datasets were generated or analyzed during the current study.

\section*{Declaration of interest}

The authors declare that they have no conflict of interest.

\section*{Author contributions}

S. Ferreira led the theoretical development of the paper, carried out most of the analytical calculations, prepared most of the figures, and wrote the manuscript. B. Freitas contributed to the correction of equations and conceptual points, participated in the critical review of the manuscript, and added figures. J. Medrado contributed to the correction of equations and conceptual points and to the critical review of the manuscript.  All authors have read and approved the final version of the manuscript.

\section*{References}
\small
\bibliographystyle{elsarticle-num} 
\bibliography{referencial}

\end{document}